\theoremstyle{definition}
\theoremstyle{definition}
\theoremstyle{plain}
\theoremstyle{plain}
\theoremstyle{plain}
\theoremstyle{plain}
\theoremstyle{plain}
\newtheorem{thm}{Theorem}[section]
\theoremstyle{definition}
\theoremstyle{definition}
\theoremstyle{definition}
\theoremstyle{definition}
\newtheorem{defin}[thm]{Definition}
\theoremstyle{definition}
\theoremstyle{plain}
\newtheorem{prop}[thm]{Proposition}
\theoremstyle{plain}
\newtheorem{lem}[thm]{Lemma}
\theoremstyle{plain}
\newtheorem{cor}[thm]{Corollary}
\theoremstyle{definition}
\theoremstyle{definition}
\theoremstyle{definition}
\theoremstyle{definition}
\theoremstyle{definition}
\def\F{\mathbb{F}}
\def\ord{{\rm ord}}
\def\mod{{\rm mod}}
\newcommand{\bbA}{{\operatorname{\bf A}}}
\newcommand{\bbP}{{\operatorname{\bf P}}}
\newcommand{\Proj}{\operatorname{Proj}}
\newcommand{\determinant}{\operatorname{det}}
\def\@seccntformat#1{\csname the#1\endcsname. }
\renewcommand\section{\@startsection {section}{1}{\z@}%
 {-3.5ex \@plus -1ex \@minus -.2ex}%
 {2.3ex \@plus.2ex}%
 {\normalfont\large\bfseries}}
\begin{document}

\title%[]
{\bf The existence of supersingular curves of genus 4\\
in arbitrary characteristic}
\author
{Momonari Kudo\thanks{Kobe City College of Technology.
E-mail: \texttt{m-kudo@math.kyushu-u.ac.jp}},
\ Shushi Harashita\thanks{Graduate School of Environment and Information Sciences, Yokohama National University.
E-mail: \texttt{harasita@ynu.ac.jp}}
\ and Hayato Senda\thanks{Graduate School of Environment and Information Sciences, Yokohama National University.
E-mail: \texttt{senda-hayato-fw@ynu.jp}}}

\maketitle

%\begin{abstract}
%\end{abstract}

 \begin{abstract}
We prove that
there exists a supersingular nonsingular curve of genus $4$ 
in arbitrary characteristic $p$.
For $p>3$ we shall prove that
the desingularization of a certain fiber product over $\bbP^1$ of two supersingular elliptic curves is supersingular.
% In this paper, we explicitly determine the automorphism group of
% every superspecial nonhyperelliptic curve of genus $4$ over $\F_{11}$.
% Our algorithm determining automorphism groups works for any nonhyperelliptic curve of genus $4$ over finite fields.
% With this computation, we show the compatibility between the enumeration
% of superspecial curves of genus $4$ over $\F_{11}$
% obtained in \cite{KH17}
% and an enumeration by Galois cohomology theory.
\end{abstract}

\section{Introduction}
Let $K$ be an algebraically closed field of positive characteristic.
For a nonsingular algebraic curve $C$ over $K$
we call $C$ {\it supersingular} (resp.\ {\it superspecial}) if its Jacobian $J(C)$ is isogenous (resp.\ isomorphic) to a product of supersingular elliptic curves.
% over the algebraic closure $\overline{K}$.

As to supersingular curves, the following is a basic problem (cf.\ \cite[Question 2.2]{Pries}).
\begin{quote}
For given $g$, does there exist a supersingular curve
of genus $g$ in any characteristic $p$?
\end{quote}

For $g\le 3$, this problem was solved affirmatively.
%and $g=4$ is the first open case (cf.\ \cite[Question 3.4]{Pries}).
The case of $g=1$, i.e., elliptic curves is due to Deuring \cite{Deuring}.
As a proof for $g=2$ with $p>3$ and for $g=3$ with $p>2$, we refer to
the stronger fact that
there exists a maximal curve of genus $g$
over $\F_{p^{2e}}$ if $g=2$ and $p^{2e}\ne 4,9$
(cf.\ Serre~\cite[Th\'eor\`eme 3]{Serre1983})
and if $g=3$, $p\ge 3$ and $e$ is odd
(cf.\ Ibukiyama \cite[Theorem 1]{Ibukiyama}),
where we recall the general fact that any maximal curve over $\F_{p^2}$ is superspecial (and therefore supersingular).
Also Ibukiyama, Katsura and Oort in \cite[Proposition 3.1]{IKO}
proved the existence of superspecial curves of genus $2$ for $p>3$.
Even in characteristic $3$, there exists a supersingular curve of genus 2: for example $y^2=x^5+1$ is supersingular (but is not superspecial),
since its Cartier-Manin matrix is nilpotent,
see \eqref{Cartier-Manin matrix} and \eqref{Supersingularity} in Section \ref{SectionHoweCurves} for the Cartier-Manin matrix and a criterion for the supersingularity.
%For $g=3$, in Ibukiyama \cite[Theorem 1]{Ibukiyama}),
%  if $g=3$, $p\ge 3$ and $e$, there exists a maximal curve of genus $3$ over $\F_{p^{2}}$.
%Moreover if $g\le 3$, some theoretical approaches
%%to finding or enumerating superspecial curves
%are available,
%since any principally polarized abelian variety of dimension $g\le 3$
%is the Jacobian variety of a (possibly reducible) curve, see Oort-Ueno \cite{OU}.
%In the case of principally polarized abelian varieties,
%the number of isomorphism classes of superspecial ones is described by a class %number of
%a quaternion unitary group, see Ibukiyama-Katsura-Oort \cite[Theorem 2.10]{IKO},
%and explicit formulae of those class numbers are given by Hashimoto-Ibukiyama \%cite{HI} for $g=2$
%and by Hashimoto \cite{Hashimoto} for $g=3$.
%The enumeration of superspecial curves for $g\le 3$ is done by
%removing the contribution of reduced curves.
For the case of $p=2$, we refer to the cerebrated paper \cite{GV} by
van der Geer and van der Vlugt, where they proved that there exists a supersingular curve of an arbitrary genus in characteristic $2$.

This paper focuses on the first open case, i.e., the case of $g=4$ (cf.\ \cite[Question 3.4]{Pries}).
Let us recall some recent works, restricting ourselves to the case of $g=4$. 
In \cite{LMPT}, Li, Montovan, Pries and Tang proved
that if $p \equiv 2 \bmod{3}$ or if $p \equiv 2, 3, 4 \bmod{5}$, then there exists a supersingular curve of genus $4$, in particular for $p=3$.
%For the enumeration of superspecial curves over small finite fields, see \cite{KH16}, \cite{KH17} and \cite{KH18}.
% (cf.\ for the {\it superspecial} case, 
For odd $p \equiv 2 \bmod{3}$, in \cite{K18} the first author showed that there exists a superspecial (and thus supersingular) nonsingular curve of genus $4$.

% In the {\it superspecial} case, the (non-)existence of a superspecial curve is known for some $p$, and the isomorphism classes of such curves have been enumerated completely over finite fields for small $p$.
% Specifically, there exist superspecial nonhyperelliptic (resp.\ hyperelliptic) curves of genus $4$ for $p=5$ and $11$ (resp.\ $p = 17$ and $19$), whereas there does not exist such a curve for $p=7$.
% For $p \equiv 2 \bmod{3}$, it is shown in [??] that there exists a superspecial curve of genus $4$.
% Let us now update Ekedahl's problem for $g=4$: ``Does there exist a superspecial nonhyperelliptic (resp.\ hyperelliptic) curve for any $p > 11$ with $p \equiv 1 \bmod{3}$ (resp.\ $p > 19$)?''.
% In the {\it supersingular} case, it is deduced from [??] that there exists a supersingular curve in characteristic $p=2$.
% In [??], if $p \equiv 2 \bmod{3}$ or if $p \equiv 2, 3, 4 \bmod{5}$, then there exists a supersingular curve of genus $4$ over $\overline{\mathbb{F}_p}$, in particular for $p=3$.

This paper aims to remove any condition on $p$ for the existence of supersingular curves of genus $4$.
For this, we use curves introduced by Howe
%We now describe our main results.
in \cite{Howe}, where he studied a curve of genus $4$ defined as the desingularization of the fiber product over $\bbP^1$ of two elliptic curves.
In this paper, we call such a curve a {\it Howe curve}, see Definition \ref{DefinitionHoweCurves} for
the precise definition of Howe curves. Our main theorem is:

\begin{thm}\label{TheoremInIntroduction}
For any $p > 3$,
there exists a supersingular Howe curve of genus $4$ in characteristic $p$. 
\end{thm}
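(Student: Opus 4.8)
The plan is to reduce the supersingularity of a genus-$4$ Howe curve to that of its three $(\Z/2\Z)^2$-quotients and then solve the resulting explicit system. As set up in Section~\ref{SectionHoweCurves}, if $E_1,E_2$ are the two elliptic curves and we normalize their common branch point on $\bbP^1$ to be $x=\infty$, writing $E_i\colon y^2=f_i(x)$ with $f_i$ separable of degree $3$ and $\gcd(f_1,f_2)=1$, then the third quotient $D$ is the smooth model of $y^2=f_1(x)f_2(x)$, a curve of genus $2$, and the Kani--Rosen formula gives an isogeny $\Jac(C)\sim E_1\times E_2\times\Jac(D)$. Since supersingularity is inherited by abelian subvarieties and quotients and preserved under products, $C$ is supersingular if and only if $E_1$ and $E_2$ are supersingular elliptic curves and $D$ is a supersingular curve of genus $2$. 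Hence the theorem reduces to producing, for each $p>3$, separable coprime cubics $f_1,f_2$ over $\overline{\F}_p$ such that $y^2=f_1$, $y^2=f_2$ and $y^2=f_1f_2$ are all supersingular.

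First I would fix the elliptic data. By Deuring there is a supersingular elliptic curve in characteristic $p$, so after an affine change of coordinates I may take $f_1=x(x-1)(x-\lambda)$ for a fixed root $\lambda$ of the Hasse polynomial; then ``$E_1$ supersingular'' is automatic. Writing $f_2=(x-b_1)(x-b_2)(x-b_3)$ with $b_1,b_2,b_3$ pairwise distinct and distinct from $0,1,\lambda$, the condition that $y^2=f_2$ be supersingular cuts out a surface $S_p\subset\A^3$ (vanishing of the Hasse invariant of $f_2$), and on $S_p$ the condition that $D$ be supersingular is, by the criterion \eqref{Supersingularity}, the nilpotency of the $2\times2$ Cartier--Manin matrix $M$ of $y^2=f_1f_2$ from \eqref{Cartier-Manin matrix} --- equivalently $\det M=0$ together with one further equation coming from $M\cdot M^{(p)}=0$. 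These are explicit polynomials in $(b_1,b_2,b_3)$ with coefficients in $\F_p$, read off from the coefficients of $f_2^{(p-1)/2}$ and $(f_1f_2)^{(p-1)/2}$; together they define a subscheme $Z_p\subset S_p$ of expected dimension $0$, and any $\overline{\F}_p$-point of $Z_p$ lying off the diagonals $b_i=b_j$ and off $\{0,1,\lambda\}$ furnishes a supersingular Howe curve of genus $4$.

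The heart of the argument is to prove $Z_p\neq\emptyset$ for every $p>3$. I would parametrize $S_p$ locally by two of the coordinates, say $(b_1,b_2)$, with $b_3$ implicit, eliminate $b_3$ from the two remaining equations --- via resultants with respect to $b_3$ of the three defining polynomials --- to obtain two polynomials $g_1,g_2\in\F_p[b_1,b_2]$, and observe that a common zero of $g_1,g_2$ exists as soon as $R_p:=\operatorname{Res}_{b_2}(g_1,g_2)\in\F_p[b_1]$ is not the zero polynomial, since over $\overline{\F}_p$ a nonzero univariate polynomial has a root. To control $R_p$ uniformly in $p$ I would bound its degree in terms of $p$ and pin down a distinguished coefficient, or a specialization of $b_1$, at which it is visibly nonzero, settling the finitely many small primes outside the range where the estimate applies by a direct machine computation of the relevant Cartier--Manin matrices. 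One then checks that the solution so produced has $0,1,\lambda,b_1,b_2,b_3$ pairwise distinct, so that $C$ is smooth of genus exactly $4$.

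The decisive obstacle is precisely this last step: showing that the Hasse and Cartier equations for $f_2$ and for $f_1f_2$ are simultaneously solvable for every $p>3$ --- i.e.\ that the relevant resultant does not vanish identically modulo $p$ --- and that a solution exists in the open locus where no two branch points coincide. If a direct estimate on $R_p$ is out of reach, the natural fallback is geometric: regard the genus-$2$ quotients $D$ arising from Howe configurations with both elliptic factors supersingular as a family over an open part of a surface, show that its image in $\overline{\mathcal M}_2$ is not contained in the complement of the supersingular locus $\mathcal S_2$, and conclude by a dimension/completeness argument (in the spirit of intersection theory on $\overline{\mathcal M}_2$) that the image meets $\mathcal S_2$, after which one argues as above that the intersection point corresponds to a genuine supersingular Howe curve and not to a degeneration. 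Either way, the congruence-free uniformity in $p$ --- the whole point of the statement, beyond what \cite{LMPT} and \cite{K18} give --- is where essentially all the difficulty lies.
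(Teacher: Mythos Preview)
Your reduction is correct and matches the paper's Section~\ref{SectionHoweCurves}: a Howe curve is supersingular iff $E_1$, $E_2$ and the genus-$2$ quotient $D$ are, and the last condition is the nilpotency $MM^\sigma=0$ of the Cartier--Manin matrix of $y^2=f_1f_2$. But the proposal stops short of a proof precisely where you say it does: you never establish that the system has a solution for every $p>3$. Proposing to control a resultant $R_p\in\F_p[b_1]$ uniformly in $p$, with ``finitely many small primes'' handled by computation, is not a proof; nor is the vague fallback of intersecting a family with the supersingular locus in $\overline{\mathcal M}_2$ (you would need to show the family is complete or that the closure meets $\mathcal S_2$ away from the boundary, which is exactly the missing content). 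As written this is a correct outline with the decisive step left open.

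The paper's route to that step is quite different from either of your suggestions and is worth knowing. Rather than varying the roots of $f_2$, one \emph{fixes} two supersingular Weierstrass equations $y^2=x^3+A_ix+B_i$ and lets $(\lambda,\mu,\nu)$ act by $f_1(x)=x^3+A_1\mu^2x+B_1\mu^3$, $f_2(x)=(x-\lambda)^3+A_2\nu^2(x-\lambda)+B_2\nu^3$. Then $a,b,c,d$ are \emph{homogeneous} in $(\lambda,\mu,\nu)$, and after dividing $ad-bc$ by its trivial factor $(\mu\nu)^{(p+1)/2}$ one gets three homogeneous polynomials $h_0,h_1,h_2$ and a closed subscheme $V(h_0,h_1,h_2)\subset\bbP^2$. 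Nonemptiness is then forced by a projectivity argument: if $V(h_0,h_1,h_2)=\emptyset$, one constructs a morphism from the projective curve $V(h_0)$ to $\bbA^1$ (via $(a^p+c^{p-1}d:c^{p-1}d)$ on $\{a\ne0\}$ and $(ab^{p-1}+d^p:d^p)$ on $\{d\ne0\}$, glued using $ad=bc$) and checks it hits at least two points, contradicting completeness of $V(h_0)$. This replaces your resultant estimate entirely. The remaining work --- and it is substantial --- is to show every point of $V(h_0,h_1,h_2)$ is actually of Howe type ($\mu\nu\ne0$ and $\gcd(f_1,f_2)=1$); this is where the explicit Propositions on the Legendre form (coprimality of $\delta_{p-1}$ and $\delta_{p-2}$, and the factorization of $(g^e)^{(e)}$) enter, to rule out the degenerate boundary your proposal only gestures at.
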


The next corollary is deduced from Theorem \ref{TheoremInIntroduction} together with the existence results above by \cite{GV} for $p=2$ and by \cite{LMPT} for $g=4$ and $p=3$.

\begin{cor}
There exists a supersingular nonsingular curve of genus $4$ in arbitrary characteristic $p>0$.
\end{cor}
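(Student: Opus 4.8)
The plan is to establish Theorem \ref{TheoremInIntroduction} by exhibiting, for every prime $p>3$, an explicit Howe curve $C$ of genus $4$ whose Jacobian is supersingular, and then to deduce the Corollary by patching in the remaining characteristics: the case $p=2$ is covered by van der Geer--van der Vlugt \cite{GV}, and $p=3$ is covered by Li--Mantovan--Pries--Tang \cite{LMPT}. Since the Corollary is an immediate combination of these three inputs, essentially all the work is in the main theorem, so the proposal below concerns the construction and verification of a supersingular Howe curve in characteristic $p>3$.

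First I would recall the structure of a Howe curve: it is the desingularization of the fiber product $E_1 \times_{\bbP^1} E_2$ of two elliptic curves $E_1, E_2$ each mapped $2:1$ to $\bbP^1$, with branch loci chosen so that the resulting curve has genus $4$ (this requires the branch divisors to share exactly one point, or the appropriate numerical configuration, as spelled out in Definition \ref{DefinitionHoweCurves}). The key structural fact I would use is that the Jacobian $J(C)$ is isogenous to $E_1 \times E_2 \times P$, where $P$ is the Prym variety of the covering $C \to C'$ for the third quotient curve $C'$ (itself of genus $2$); concretely, there is an isogeny $J(C) \sim J(E_1) \times J(E_2) \times J(C')$ up to the relations among the intermediate quotients. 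Thus to make $C$ supersingular it suffices to (a) take $E_1$ and $E_2$ to be supersingular elliptic curves — which exist in every characteristic by Deuring \cite{Deuring} — and (b) arrange the branching so that the genus-$2$ quotient $C'$ (equivalently the Prym part) is also supersingular. Step (b) is the crux.

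For step (b), the approach is computational via the Cartier--Manin operator: after setting up $C$ (or its relevant quotient) as an explicit plane model $y^2 = f(x)$ or a hyperelliptic-type model over $K$, one writes down the Cartier--Manin matrix as in \eqref{Cartier-Manin matrix}, and supersingularity is equivalent to the nilpotency condition \eqref{Supersingularity} — namely that a suitable $(p)$-power-twisted product of the matrix vanishes (for genus $2$ this reduces to a manageable condition on the coefficients of $f$ modulo $p$). The plan is therefore to choose the two supersingular elliptic curves and the common branch point so that the one free parameter (roughly, a cross-ratio of the four branch points, three of which we may normalize to $0,1,\infty$) can be specialized to make the Cartier--Manin matrix of the genus-$2$ quotient nilpotent. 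The main obstacle I anticipate is exactly showing that such a specialization \emph{exists for every $p>3$}: the nilpotency condition cuts out a subscheme of the parameter line, and one must prove it has a $K$-point — presumably by exhibiting the defining polynomial explicitly (its coefficients being polynomials in $p$, or rather reductions of integer polynomials) and arguing it always has a root, perhaps by factoring out the known supersingular elliptic locus and showing the residual factor is nonconstant, or by a dimension/degree count ensuring nonemptiness. A secondary technical point is verifying that for the chosen parameter the fiber product genuinely desingularizes to a \emph{nonsingular} curve of genus exactly $4$ (no extra coincidences among branch points, no unexpected drop in genus), which I would check directly against Definition \ref{DefinitionHoweCurves}.

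Once the supersingular Howe curve is produced for each $p>3$, Theorem \ref{TheoremInIntroduction} follows, and then the Corollary is obtained by the two-line case analysis on $p$ indicated above, invoking \cite{GV} for $p=2$ and \cite{LMPT} for $p=3$. I would present the genus-$2$ supersingularity verification as the technical heart, isolating it as a lemma about nilpotency of an explicit matrix over $\F_p$, so that the reduction from $J(C)$ to $E_1\times E_2\times J(C')$ and the final patching both read as short formal arguments.
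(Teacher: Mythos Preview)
Your reduction of the Corollary to Theorem~\ref{TheoremInIntroduction} plus \cite{GV} for $p=2$ and \cite{LMPT} for $p=3$ is exactly what the paper does, and your high-level strategy for the Theorem---decompose $J(H)\sim E_1\times E_2\times J(C)$ with $C$ of genus $2$, take $E_1,E_2$ supersingular, and force $C$ supersingular via nilpotency of its Cartier--Manin matrix---also matches. So the architecture is right.

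The genuine gap is in the existence step, and it stems from a wrong parameter count. After fixing the two supersingular elliptic curves, the remaining freedom is not ``one free parameter'' but a two-dimensional family: in the paper's coordinates these are $(\lambda:\mu:\nu)\in\bbP^2$. The supersingularity of $C$ is not a single equation but, after the reduction in Proposition~\ref{EquationsOfSupersingularity}, a system of \emph{three} homogeneous equations $h_0=h_1=h_2=0$ in $\bbP^2$. Three hypersurfaces in $\bbP^2$ can certainly have empty intersection, so neither ``the polynomial has a root over $\overline{\F_p}$'' nor a naive degree/dimension count settles anything. The paper's actual mechanism is a projectivity argument reminiscent of the quasi-affineness of Ekedahl--Oort strata: assuming $V(h_0,h_1,h_2)=\emptyset$, one builds a well-defined morphism from the projective curve $V(h_0)\subset\bbP^2$ to $\bbA^1$ with infinite image, contradicting properness. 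You do not have this idea, and your proposed substitutes would not close the argument. A second point you underestimate is showing that every solution is genuinely ``of Howe type'' (i.e.\ $\mu,\nu\ne 0$ and $f_1,f_2$ coprime, so the fiber product really desingularizes to a nonsingular genus-$4$ curve); this is where the paper needs the explicit Legendre-form computations (Propositions~\ref{KeyProp} and \ref{Separatedness}) to control the orders $\ord_\mu$, $\ord_\nu$ of $a,b,c,d$ and to divide out the spurious factor $(\mu\nu)^{(p+1)/2}$ from $ad-bc$ before intersecting.
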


As any supersingular Howe curve has $a$-number $\ge 3$ for odd $p$ (cf.\ Section \ref{SectionHoweCurves}), 
Theorem \ref{TheoremInIntroduction} is a stronger assertion than the affirmative answer for $p>3$ to the question by Pries \cite[Question 3.6]{Pries},
which predicts
that there exists a nonsingular curve of genus $4$ with $p$-rank $0$ and $a$-number at least $2$.

Let us describe an outline of the proof of Theorem \ref{TheoremInIntroduction},
with an overview of this paper.
In Section 2, we review the definition of Howe curves and their properties, 
and show that
the existence of a supersingular Howe curve of genus $4$ is equivalent to that of two supersingular elliptic curves $E_1: y^2 = f_1$ and $E_2 : y^2 = f_2$ with coprime polynomials $f_1$ and $f_2$ of degree $3$ such that the hyperelliptic curve $C : y^2 = f:=f_1 f_2$ of genus $2$ is also supersingular.
For the supersingularity of $C$, we use the fact
that any curve of genus $2$ is supersingular if and only if $M M^{\sigma} = 0$ holds for its Cartier-Manin matrix $M$, where $\sigma$ denotes the Frobenius map.
In Section 3,
for the Legendre form $y^2=g:=x(x-1)(x-t)$, we prove two key propositions on certain coefficients in $g^{(p-1)/2}$ by explicit computations.
In Section 4,
based on the two propositions, we investigate properties of entries of $M M^{\sigma}$ as polynomials, where we regard coefficients in $f_1$ and $f_2$ as variables. The properties show that we get
a desired $(f_1, f_2)$ from
a solution of a mutivariate system
obtained by removing trivial factors from $M M^{\sigma} = 0$.
Finally, we show the existense of such a solution
%we obtain the main theorem 
by proving an analogous result of the quasi-affineness of Ekedahl-Oort strata in the case of abelian varieties.

In future work, we shall enumerate the isomorphism classes of supersingular Howe curves, whereas the enumeration in the superspecial case has been done for relatively small characteristics by Senda \cite{Senda}. 
It would be meaningful to try to apply our techniques of this paper to the case of genera higher than $4$.

%Let us give an overview of this paper.
%In Section 2, we review the definition of Howe curves and some known facts shown in \cite{Howe} and discuss the supersingularity of Howe curves.
%In Section 3, the main results are stated.
%In Section 3, we prove two key propositions on elliptic curves in Legendre form.
%Based on the two propositions, Section 4 proves the main theorem.

%====================
\subsection*{Acknowledgments}
%====================
This work was supported by JSPS Grant-in-Aid for Research Activity Start-up 18H05836 and JSPS Grant-in-Aid for Scientific Research (C) 17K05196.
% \subsection*{Acknowledgments}
% The authors thank Professor Tomoyoshi Ibukiyama for his valuable comments.
% This work was supported by
% JSPS Grant-in-Aid for Scientific Research (C) 17K05196.
\section{Howe curves}\label{SectionHoweCurves}
In this section, we recall the definition of Howe curves
and properties of these curves, and study
the supersingularity of them.

\begin{defin}\label{DefinitionHoweCurves}
A {\it Howe curve} is a curve which is isomorphic to the desingularization of
the fiber product $E_1 \times_{\bbP^1} E_2$ of two double covers
$E_i\to \bbP^1$ ramified over $S_i$, where
$S_i$ consists of 4 points and $|S_1\cap S_2|=1$ holds.
%setting $S_i$ be the set of ramified points of $E_i\to \bbP^1$ for $i=1,2$.
\end{defin}

To achieve our goal, for $p>3$ we realize a Howe curve in the following way.
Let $K$ be an algebraically closed field of characteristic $p$.
% is constructed from two elliptic curves.
Let 
\begin{eqnarray}
y^2 &=& x^3 + A_1 x + B_1,\label{originalE1}\\
y^2 &=& x^3 + A_2 x + B_2\label{originalE2}
\end{eqnarray}
be two (nonsingular) elliptic curves, where $A_1,B_1,A_2,B_2\in K$.
Let $\lambda, \mu, \nu$ be elements of $K$ and set
\begin{eqnarray}
f_1(x) &=& x^3 + A_1 \mu^2 x + B_1\mu^3,\label{f1}\\
f_2(x) &=& (x-\lambda)^3 + A_2 \nu^2(x-\lambda) + B_2 \nu^3.\label{f2}
\end{eqnarray}
Consider two elliptic curves
\begin{eqnarray*}
E_1:\quad z^2y &=& y^3f_1(x/y) := x^3 + A_1 \mu^2 xy^2 + B_1\mu^3y^3,\\
E_2:\quad w^2y &=& y^3f_2(x/y) := (x-\lambda y)^3 + A_2 \nu^2(x-\lambda y)y^2 + B_2 \nu^3 y^3
\end{eqnarray*}
with the double covers 
\begin{equation*}
\pi_i : E_i \to \bbP^1=\Proj(K[x,y]).
\end{equation*}
We say that $(\lambda,\mu,\nu)$ is {\it of Howe type} if
\begin{enumerate}
\item[(i)] $\mu\ne 0$ and $\nu\ne 0$;
\item[(ii)] $f_1$ and $f_2$ are coprime.
\end{enumerate}
If $(\lambda,\mu,\nu)$ is of Howe type, then
the desingularization of the fiber product $E_1\times_{\bbP^1}E_2$
is a Howe curve, since $E_i\to\bbP^1$ is ramified over the set consisting of 4 points, say $S_i$,
and $S_1\cap S_2 = \{(1:0)\}$.

Suppose that $(\lambda,\mu,\nu)$ is of Howe type.
Put
\begin{equation}
f(x) = f_1(x)f_2(x)
\end{equation}
and consider the hyperelliptic curve $C$ of genus $2$ defined by
\begin{equation*}
C: u^2 = f(x).
\end{equation*}
%In this case, $H$ obtained in the way above are called a {\it Howe curve}.
It was proven by Howe \cite[Theorem 2.1]{Howe}
that $H$ is of genus $4$ and there exist two isogenies
\begin{eqnarray*}
\varphi: J(H) \longrightarrow E_1 \times E_2 \times J(C),\\
\psi: E_1 \times E_2 \times J(C) \longrightarrow J(H)
\end{eqnarray*}
such that $\varphi\circ\psi$ and $\psi\circ\varphi$ are the multiplication by $2$.
%$J(H)$ is isogenous to $J(E_1)\times J(E_2) \times J(C)$.
Hence $H$ is supersingular if and only if $E_1$, $E_2$ and $C$
is supersingular. Moreover, if $p$ is odd,
the $a$-number of $H$ is equal to the sum of
the $a$-numbers of $E_1$, $E_2$ and $J(C)$,
whence any supersingular Howe curve is of $a$-number $\ge 3$.

Now we recall a criterion for the supersingularity of $C$.
Let $\gamma_i$ be the $x^i$-coefficient of $f(x)^{(p-1)/2}$, i.e.,
\begin{equation}
f(x)^{(p-1)/2} = \sum_{i=0}^{3(p-1)} \gamma_i x^i.
\end{equation}
Put
\[
a=\gamma_{p-1},\quad
b=\gamma_{2p-1},\quad
c=\gamma_{p-2}\quad \text{and}\quad
d=\gamma_{2p-2}.
\]
In Section 4, we shall use the fact that $\gamma_i$ and therefore $a,b,c$ and $d$
are homogeneous when we regard them as polynomials in $\lambda, \mu$ and $\nu$.
%Let $C$ be a nonsingular curve over $k$.
Let $M$ be the Cartier-Manin matrix of $C$,
that is a matrix representing the action of Verschiebung on $H^0(C,\Omega^1)$.
It is known (cf.\ \cite[4.1]{Gonzalez} and \cite[\S 2]{Yui}) that the Cartier-Manin matrix of $C$ is given by
\begin{equation}\label{Cartier-Manin matrix}
M := \begin{pmatrix}
a & b \\
c & d
\end{pmatrix}.
\end{equation}
It follows from \cite[Step 2 of the proof of Theorem 4.8]{CO}
(also see \cite[4.0.3]{H07} for another proof)
that a curve $D$ of genus $2$ is supersingular if and only if
the Verschiebung $V$ on $H^0(D,\Omega_D^1)$ satisfies $V^2=0$.
Hence $C$ is supersingular if and only if
\begin{equation}\label{Supersingularity}
M M^\sigma = \begin{pmatrix}
a^{p+1}+bc^p &  ab^p+bd^p\\
a^pc+c^pd & b^pc+d^{p+1}
\end{pmatrix} = 0,
\end{equation}
where $M^\sigma = \begin{pmatrix}a^p & b^p \\ c^p & d^p\end{pmatrix}$.

\begin{prop}\label{EquationsOfSupersingularity}
Assume that $E_1$ and $E_2$ are supersingular.
Then $H$ is supersingular if and only if
$ad-bc=0$, $ab^{p-1}+d^p=0$ and $a^p+c^{p-1}d=0$.
\end{prop}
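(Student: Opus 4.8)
The plan is to combine the reduction already in place with a short piece of elementary algebra over $K$. By Howe's isogenies $\varphi,\psi$ and the criterion recalled just before the proposition, once $E_1$ and $E_2$ are supersingular, $H$ is supersingular exactly when $C$ is, i.e.\ exactly when $MM^\sigma=0$; and $MM^\sigma=0$ is precisely the system of four equations
\[
a^{p+1}+bc^p=0,\qquad ab^p+bd^p=0,\qquad a^pc+c^pd=0,\qquad b^pc+d^{p+1}=0
\]
read off from \eqref{Supersingularity}. So it suffices to show that this system of four equations in $a,b,c,d\in K$ is equivalent to the three equations $ad-bc=0$, $ab^{p-1}+d^p=0$, $a^p+c^{p-1}d=0$.

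For the implication ``three equations $\Rightarrow$ four equations'', I would first note that the two off-diagonal entries of $MM^\sigma$ factor as $b\,(ab^{p-1}+d^p)$ and $c\,(a^p+c^{p-1}d)$, hence vanish at once. For the diagonal entries, multiply $a^p=-c^{p-1}d$ by $a$ and substitute $ad=bc$ to get $a^{p+1}=-ac^{p-1}d=-bc^p$, and multiply $d^p=-ab^{p-1}$ by $d$ and substitute $ad=bc$ to get $d^{p+1}=-adb^{p-1}=-b^pc$; thus all four entries of $MM^\sigma$ are zero.

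For the reverse implication I would start from $MM^\sigma=0$, read off from the off-diagonal entries that $b(ab^{p-1}+d^p)=0$ and $c(a^p+c^{p-1}d)=0$, and split into cases. If $b\ne 0$ and $c\ne 0$, then $ab^{p-1}+d^p=0$ and $a^p+c^{p-1}d=0$ hold directly; substituting $d^{p+1}=d\cdot d^p=-ab^{p-1}d$ into the lower-right entry $b^pc+d^{p+1}=0$ gives $b^{p-1}(bc-ad)=0$, so $ad=bc$ because $b\ne 0$. If instead $b=0$ or $c=0$, then the upper-left entry $a^{p+1}+bc^p=0$ forces $a=0$ and the lower-right entry $b^pc+d^{p+1}=0$ forces $d=0$; since also $bc=0$ in this situation, all three of $ad-bc=0$, $ab^{p-1}+d^p=0$, $a^p+c^{p-1}d=0$ hold trivially. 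This exhausts the cases and gives the equivalence.

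The argument is entirely elementary, so there is no deep obstacle; the one point that genuinely requires care is this case split on the vanishing of $b$ and $c$. It is tempting to ``cancel'' the factors $b$ and $c$ visible in the entries of $MM^\sigma$ and obtain the reduced system immediately, but on the locus $b=0$ (or $c=0$) such cancellation is illegitimate, and one must instead invoke the diagonal entries to conclude $a=d=0$ there. Tracking this degenerate locus is the only subtlety, and I do not anticipate any other difficulty.
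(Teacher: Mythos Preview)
Your proof is correct and follows essentially the same elementary route as the paper: reduce to $MM^\sigma=0$ via Howe's isogeny and the $V^2=0$ criterion, then manipulate the four entries algebraically. The only organizational difference is that the paper first observes $\det(M)=ad-bc=0$ directly from $\det(MM^\sigma)=0$ and then factors all four entries uniformly, whereas you recover $ad-bc=0$ inside the case $b\ne0,c\ne0$ by substitution; both arguments handle the degenerate cases $b=0$ or $c=0$ in the same way.
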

\begin{proof}
Since $E_1$ and $E_2$ are supersingular,
$H$ is supersingular if and only if $C$ is supersingular.
As explained above, $C$ is supersingular if and only if
\eqref{Supersingularity} holds.
%\begin{equation}\label{Supersingularity}
%M M^\sigma = \begin{pmatrix}
%a^{p+1}+bc^p &  ab^p+bd^p\\
%a^pc+c^pd & b^pc+d^{p+1}
%\end{pmatrix} = 0.
%\end{equation}
If $ad-bc=0$, then we have
\begin{eqnarray}\label{four entries}
a^{p+1}+bc^p &=& a(a^p+c^{p-1}d),\\
ab^p+bd^p &=& b(ab^{p-1}+d^p),\\
a^pc+c^pd &=& c(a^p+c^{p-1}d),\\
b^pc+d^{p+1} &=& d(ab^{p-1}+d^p).
\end{eqnarray}
Thus the ``if"-part is true.

Suppose that \eqref{Supersingularity} holds.
Then clearly we have
$\determinant(M)=ad-bc=0$, which implies
\begin{eqnarray}
a^{p-1}(ab^{p-1}+d^p) &=& b^{p-1}(a^p+c^{p-1}d),\label{BasicEqualities1}\\
c^{p-1}(ab^{p-1}+d^p) &=& d^{p-1}(a^p+c^{p-1}d).\label{BasicEqualities2}
\end{eqnarray}
Also by \eqref{four entries} we have $a^p+c^{p-1}d=0$ unless $a=c=0$ and
$ab^{p-1}+d^p=0$ unless $b=d=0$.
If $a=c=0$, by \eqref{BasicEqualities1} and \eqref{BasicEqualities2} we have $a^p+c^{p-1}d=0$
unless $b=d=0$. Similarly
If $b=d=0$, by \eqref{BasicEqualities1} and \eqref{BasicEqualities2} we have $ab^{p-1}+d^p=0$
unless $a=c=0$. Obviously $(a,b,c,d)=(0,0,0,0)$ satisfies
$ad-bc=0$, $ab^{p-1}+d^p=0$ and $a^p+c^{p-1}d=0$.
Thus the desired assertion is true.
\end{proof}

For later use, we review how the Cartier-Manin matrix is changed by a linear change of variables.
\begin{lem}\label{ChangeOfVariables}
Let $X$ be a new variable and consider substituting
$uX+v$ for $x$, where $u,v\in K$ with $u\ne 0$.
Let $\gamma'_i$ be the $X^i$-coefficient of $f(uX+v)^{(p-1)/2}$ and
set 
\[
M' = \begin{pmatrix}\gamma'_{p-1}&\gamma'_{2p-1}\\\gamma'_{p-2}&\gamma'_{2p-2}\end{pmatrix}.\]
Then we have
\[
M' = P^{-1}MP^\sigma
\]
with
\[
P = \begin{pmatrix}u & 0 \\ uv & u^2\end{pmatrix}.
\]
\end{lem}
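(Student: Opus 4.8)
The plan is to verify the claimed matrix identity by computing both sides explicitly and comparing them entry by entry, the one substantive input being Lucas' theorem. First I would substitute $x = uX+v$ into $f(x)^{(p-1)/2} = \sum_i \gamma_i x^i$ and expand with the binomial theorem:
\[
f(uX+v)^{(p-1)/2} \;=\; \sum_{i=0}^{3(p-1)} \gamma_i\,(uX+v)^i \;=\; \sum_{k\ge 0}\Bigl( u^k\sum_{i\ge k} \binom{i}{k}\,v^{i-k}\,\gamma_i \Bigr) X^k ,
\]
so that $\gamma'_k = u^k\sum_{i\ge k}\binom{i}{k}v^{i-k}\gamma_i$. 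Since $\deg f = 6$ we have $\deg f^{(p-1)/2} = 3(p-1) = 3p-3$, so $\gamma_i = 0$ for $i > 3p-3$ and the inner sum effectively runs over $k\le i\le 3p-3$.

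The key step is to determine, for each of the four relevant exponents $k\in\{p-1,\,2p-1,\,p-2,\,2p-2\}$, exactly which $\binom{i}{k}$ are nonzero modulo $p$ for $k\le i\le 3p-3$. Writing $i = i_1 p + i_0$ in base $p$ (so $i_1\in\{0,1,2\}$) and applying Lucas' theorem, one checks: for $k = 2p-1$ only $i = 2p-1$ survives, with coefficient $1$; for $k = 2p-2$ only $i\in\{2p-2,2p-1\}$ survive, with coefficients $1$ and $-1$; for $k = p-1$ only $i\in\{p-1,2p-1\}$ survive, with coefficients $1$ and $1$; and for $k = p-2$ only $i\in\{p-2,p-1,2p-2,2p-1\}$ survive, with coefficients $1,-1,1,-1$. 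In particular every surviving index lies in $\{p-2,p-1,2p-2,2p-1\}$, so $\gamma'_{p-1},\gamma'_{2p-1},\gamma'_{p-2},\gamma'_{2p-2}$ can be written in terms of $a,b,c,d$ alone, and carrying out the substitution gives
\[
\gamma'_{p-1} = u^{p-1}(a+bv^p),\quad \gamma'_{2p-1} = u^{2p-1}b,\quad \gamma'_{p-2} = u^{p-2}(c-av+dv^p-bv^{p+1}),\quad \gamma'_{2p-2} = u^{2p-2}(d-bv).
\]
It then remains to multiply out $P^{-1}MP^\sigma$ — with $P^{-1} = \begin{pmatrix} u^{-1} & 0 \\ -vu^{-2} & u^{-2}\end{pmatrix}$ and $P^\sigma = \begin{pmatrix} u^p & 0 \\ u^pv^p & u^{2p}\end{pmatrix}$ — and to observe that its four entries coincide with the expressions just displayed.

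I expect the Lucas bookkeeping to be the only real obstacle: one must check carefully that no unexpected index contributes, in particular that $i = 3p-2$ and $i = 3p-1$ drop out precisely because $\deg f^{(p-1)/2} = 3p-3$, and that the surviving binomial coefficients carry exactly the signs indicated. As a consistency check — and to fix the convention for the semilinear operator represented by $M$ — I would also note the coordinate-free reading of the formula: the substitution $x = uX+v$ identifies $C$ with the curve $Y^2 = f(uX+v)$, under which $dx/y\mapsto u\,dX/Y$ and $x\,dx/y\mapsto uv\,dX/Y + u^2\,X\,dX/Y$, so the two standard bases of $H^0(C,\Omega^1)$ differ by $P$; functoriality and $\sigma$-semilinearity of the Cartier operator then give the transformation $M\mapsto P^{-1}MP^\sigma$.
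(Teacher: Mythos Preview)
Your proposal is correct and follows essentially the same route as the paper: expand $f(uX+v)^{(p-1)/2}$ via the binomial theorem, determine which $\binom{i}{k}$ survive modulo $p$ for $k\in\{p-2,p-1,2p-2,2p-1\}$ and $i\le 3p-3$, and read off the four entries of $M'$. The paper simply lists the values of the relevant binomials, whereas you organize the same computation through Lucas' theorem and then explicitly multiply out $P^{-1}MP^\sigma$ (which the paper leaves to the reader); your coordinate-free remark about the basis change on $H^0(C,\Omega^1)$ is a nice sanity check but not needed for the argument.
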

\begin{proof}
We have
\[
\gamma'_i = \sum_{j = i}^{3(p-1)} \binom{j}{i}u^iv^{j-i}\gamma_j 
\]
as
\[
f(uX+v)^{(p-1)/2} = \sum_{j=0}^{3(p-1)} \gamma_j(uX+v)^j
%= \sum_{j=0}^{3(p-1)}\sum_{i=0}^{j} \binom{j}{i}\gamma_j u^iv^{j-i}X^i
= \sum_{i=0}^{3(p-1)}\left(\sum_{j = i}^{3(p-1)} \binom{j}{i}u^iv^{j-i}\gamma_j \right) X^i.
\]
Then we have
\begin{eqnarray*}
\gamma'_{p-1} &=& u^{p-1}(\gamma_{p-1} + v^p\gamma_{2p-1}),\\
\gamma'_{2p-1} &=& u^{2p-1}\gamma_{2p-1},\\
\gamma'_{p-2} &=& u^{p-2}(\gamma_{p-2}-v\gamma_{p-1} +v^p\gamma_{2p-2} - v^{p+1}\gamma_{2p-1}),\\
\gamma'_{2p-2} &=& u^{2p-2}(\gamma_{2p-2} - v\gamma_{2p-1})
\end{eqnarray*}
by calculating, in characteristic $p$, the binomials
\[
\binom{j}{p-1} =\begin{cases}1 & \text{if } j=p-1,2p-1,\\0 & \text{otherwise},\end{cases}
\qquad
\binom{j}{2p-1} =\begin{cases}1 & \text{if } j=2p-1,\\0 & \text{otherwise},\end{cases}\quad
\]
\[
\binom{j}{p-2} =\begin{cases}1 & \text{if } j=p-2,2p-2\\ -1 & \text{if } j=p-1,2p-1,\\ 0 & \text{otherwise},\end{cases}\qquad
\binom{j}{2p-2} =\begin{cases}1 & \text{if } j=2p-2,\\
-1 & \text{if } j=2p-1,\\
0 & \text{otherwise}\end{cases}
\]
for $j\le 3p-3$.
\end{proof}
\section{Two propositions on the Legendre form}
We show two propositions (Propositions \ref{KeyProp} and \ref{Separatedness}) on the Legendre form, which play important roles in the proof of the main theorem.

Assume $p\ge 3$.
Let $g(x)=x(x-1)(x-t)$ and $e=(p-1)/2$, where we regard $t$ as an indeterminate.
We define a polynomial $H_p (t)$ by
\begin{eqnarray}
H_p (t) &:= & \sum_{i=0}^{e} \binom{e}{i}^2 t^i. \label{Hp_t}
\end{eqnarray}
Let $\delta_{p-1}(t)$ be the $x^{p-1}$-coefficient of $g(x)^e$.
It follows from
\begin{eqnarray}
\delta_{p-1}(t) = (-1)^e  H_p(t) \label{eq:delta_p-1}
\end{eqnarray}
that
$y^2 = x(x-1)(x-t_0)$ is a supersingular elliptic curve
for $t_0 \in \overline{\F_p}$ if and only if $H_p(t_0) = 0$,
see e.g., \cite[Chap.\ V, Theorem 4.1]{Sil}.

For our purpose, we need to study
the $x^{p-2}$-coefficient $\delta_{p-2}(t)$ of $g(x)^e$.
%as well as $\delta_{p-1}(t)$.
\begin{prop}\label{KeyProp}
%Let $\delta_{p-1}(t)$ and $\delta_{p-2}(t)$ be the $x^{p-1}$-coefficient and the $x^{p-2}$-coefficient of $g(x)^e$ respectively.
%Then 
We have the following:
\begin{enumerate}
\item[$(1)$] The polynomial $\delta_{p-2}(t)$ is given explicitly by
%\begin{eqnarray}
%\delta_{p-1}(t) = (-1)^e  H_p (t), \label{eq:delta_p-1}
%\end{eqnarray}
%and
\begin{eqnarray}
\delta_{p-2}(t) = (-1)^{e-1} \sum_{i=1}^{e} \binom{e}{i-1} \binom{e}{i} t^i . \label{eq:delta_p-2}
\end{eqnarray}
%\item[$(1)$] $2 \delta_{p-2}^{\prime}(t) =  2 t \delta_{p-1}^{\prime}(t) + \delta_{p-1}(t)$.
\item[$(2)$] $\displaystyle 2 \frac{d}{dt} \delta_{p-2}(t) =  2 t \frac{d}{dt}\delta_{p-1}(t) + \delta_{p-1}(t)$.
\item[$(3)$] $\displaystyle (e + 1) \delta_{p-2}(t) = (e + 1) t \delta_{p-1}(t) + t (t - 1) \frac{d}{dt} \delta_{p-1}(t) $.
\item[$(4)$] The polynomials $\delta_{p-1}(t)$ and $\delta_{p-2}(t)$ have no common root.
\end{enumerate}
\end{prop}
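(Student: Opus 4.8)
The plan is to dispatch (1) by a direct expansion, (2) and (3) by inserting the closed forms of (1) into both sides and comparing coefficients, and (4) by combining (2) and (3) into a second-order differential equation for $\delta_{p-1}(t)$.

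\textbf{For (1).} I would write $g(x)^e=x^e(x-1)^e(x-t)^e$, expand $(x-1)^e$ and $(x-t)^e$ by the binomial theorem, and read off the coefficient of $x^m$ in $g(x)^e$ as $(-1)^{m-e}\sum_{j+k=m-e}\binom{e}{j}\binom{e}{k}t^{e-k}$. Taking $m=p-1=2e$ recovers $\delta_{p-1}(t)=(-1)^eH_p(t)$ of \eqref{eq:delta_p-1} after the substitution $i=e-k$ and $\binom{e}{e-k}=\binom{e}{k}$; taking $m=p-2=2e-1$, the same substitution together with $\binom{e}{e-1-k}=\binom{e}{i-1}$ produces \eqref{eq:delta_p-2}. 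This is pure bookkeeping.

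\textbf{For (2) and (3).} I would substitute $\delta_{p-1}(t)=(-1)^e\sum_{i=0}^e\binom{e}{i}^2t^i$ and $\delta_{p-2}(t)=(-1)^{e-1}\sum_{i=1}^e\binom{e}{i-1}\binom{e}{i}t^i$ into the left- and right-hand sides and compare the coefficient of $t^i$. Using $\binom{e}{i+1}=\tfrac{e-i}{i+1}\binom{e}{i}$ to turn every product of two binomials into a scalar multiple of $\binom{e}{i}^2$, each of (2) and (3) collapses to an elementary identity in $i$ and $e$ that holds precisely because $2e+1=p\equiv 0$ in $K$; the same congruence is what kills the extra top-degree term (the $t^e$-term for (2), the $t^{e+1}$-term for (3)). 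I would only display the coefficient comparison and this one reduction.

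\textbf{For (4).} This is the step that needs an idea. Differentiating (3) and substituting for $\tfrac{d}{dt}\delta_{p-2}$ by means of (2) yields that $\delta_{p-1}(t)$ satisfies the second-order linear differential equation
\[
2t(t-1)\,\delta_{p-1}''(t)+2(2t-1)\,\delta_{p-1}'(t)+(e+1)\,\delta_{p-1}(t)=0 .
\]
I claim $\delta_{p-1}$ has only simple roots. Its roots avoid $\{0,1\}$, because $\delta_{p-1}(0)=(-1)^e\neq 0$ and $\delta_{p-1}(1)=(-1)^e\binom{p-1}{e}\neq 0$ (Vandermonde gives $H_p(1)=\binom{p-1}{e}$, and $\binom{p-1}{e}\equiv(-1)^e\bmod p$). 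At a hypothetical multiple root $t_0\notin\{0,1\}$ the leading coefficient $2t_0(t_0-1)$ of the equation is invertible, so a Leibniz computation shows inductively that $\delta_{p-1}^{(k)}(t_0)=0$ for every $k\ge 0$; but $\delta_{p-1}^{(e)}=e!\,(-1)^e$ is a nonzero constant since $e=(p-1)/2<p$, a contradiction. (Alternatively, this simplicity is classical; cf.\ \cite[Chap.\ V, Theorem~4.1]{Sil}.) Finally, if $t_0$ were a common root of $\delta_{p-1}$ and $\delta_{p-2}$, then $t_0\notin\{0,1\}$ as above, and evaluating (3) at $t_0$ gives $0=t_0(t_0-1)\,\delta_{p-1}'(t_0)$; since $e+1=(p+1)/2\not\equiv 0\bmod p$ and $t_0(t_0-1)\neq 0$, this forces $\delta_{p-1}'(t_0)=0$, so $t_0$ is a multiple root of $\delta_{p-1}$, contradicting what was just proved. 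Hence (4) holds.

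\textbf{Expected obstacle.} Everything but (4) is mechanical. The genuinely nontrivial point is to recognize that (2) and (3) together pin $\delta_{p-1}$ down as a solution of a hypergeometric-type equation regular away from $t=0,1$, and then to run the characteristic-$p$ analogue of ``a solution of a regular second-order linear ODE has only simple zeros'', where it is essential that $\deg\delta_{p-1}=e<p$ so that $\delta_{p-1}^{(e)}$ is a nonzero constant.
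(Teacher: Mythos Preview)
Your proposal is correct. Part (1) and the final deduction in (4) coincide with the paper's argument: the paper also reads off \eqref{eq:delta_p-2} from the binomial expansion, and for (4) it evaluates (3) at a hypothetical common root $\alpha\notin\{0,1\}$ to force $\delta_{p-1}'(\alpha)=0$ and then invokes the simplicity of the roots of $H_p$.

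Where you diverge is in (2), (3), and in how you justify that $H_p$ has simple roots. For (2) and (3) the paper does not compare coefficients directly; instead it rewrites $H_p(t)$ via Pascal's rule $\binom{e}{i}=\binom{e+1}{i+1}-\binom{e}{i+1}$, obtains an auxiliary identity relating $t\delta_{p-1}$, $\delta_{p-2}$ and a third sum, and then differentiates and recombines these. Your coefficient-by-coefficient approach, reducing everything to $2e+1\equiv 0\bmod p$ via $\binom{e}{i+1}=\tfrac{e-i}{i+1}\binom{e}{i}$, is shorter and more transparent, and it makes explicit why characteristic $p$ is exactly what is needed. For the simplicity of the roots, the paper simply cites Igusa \cite{Igusa} (cf.\ \cite[Chap.~V, Theorem~4.1(c)]{Sil}), whereas you derive it internally: combining (2) and (3) into the hypergeometric-type equation $2t(t-1)\delta_{p-1}''+2(2t-1)\delta_{p-1}'+(e+1)\delta_{p-1}=0$ and running the regular-point induction (using $e<p$ so that $\delta_{p-1}^{(e)}=(-1)^e e!\ne 0$). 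This is a genuine bonus---your argument is self-contained and explains \emph{why} (2) is stated at all, since the paper only uses (2) as an input to the proof of (3). One cosmetic point: in your last paragraph the nonvanishing of $e+1$ is irrelevant to the implication you draw from (3); only $t_0(t_0-1)\ne 0$ matters there.
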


\begin{proof}
\begin{enumerate}
\item It follows from the binomial theorem that
\begin{eqnarray}
g(x)^e &=& x^e(x-1)^e(x-t)^e \nonumber \\
&=& x^e \sum_{i=0}^e \binom{e}{i}x^i(-1)^{e-i}\sum_{j=0}^e \binom{e}{j}x^j(-t)^{e-j} \nonumber \\
&=& \sum_{i,j} (-1)^{i+j}\binom{e}{i}\binom{e}{j}x^{e+i+j}t^{e-j}. \label{eq:g_e}
\end{eqnarray}
% Since $i+j=e$ and $(-1)^e = 1$ (resp.\ $i+j=e-1$ and $(-1)^{e-1}=-1$) if $e+i+j=p-1$ (resp.\ $e+i+j=p-2$), we have
%Since $i+j=e$ (resp.\ $i+j=e-1$) if $e+i+j=p-1$ (resp.\ $e+i+j=p-2$), we have
Since $i+j=e-1$ if $e+i+j=p-2$, we have
%\[
%\delta_{p-1}(t) = (-1)^e \sum_{i+j=e} \binom{e}{i} \binom{e}{j} t^{e-j} = (-1)^e \sum_{i=0}^e \binom{e}{i} \binom{e}{e-i} t^{i} = (-1)^e \sum_{i=0}^{e} \binom{e}{i}^2 t^i
%\]
%and 
\begin{eqnarray}
\delta_{p-2}(t) &=& (-1)^{e-1} \sum_{i+j=e-1} \binom{e}{i} \binom{e}{j} t^{e-j} = (-1)^{e-1} \sum_{i=0}^{e-1} \binom{e}{i} \binom{e}{e-1-i} t^{i+1} \nonumber \\
& = & (-1)^{e-1} \sum_{i=0}^{e-1} \binom{e}{i} \binom{e}{i+1} t^{i+1}, \nonumber
\end{eqnarray}
which is equal to \eqref{eq:delta_p-2} by replacing the index $i$ by $i+1$.
\item The polynomial $H_p (t)$ is rearranged as follows:
\begin{eqnarray}
H_p (t) = \sum_{i=0}^e \binom{e}{i} \binom{e}{i} t^{i} & = & \sum_{i=0}^e \left( \binom{e+1}{i+1} -  \binom{e}{i+1} \right) \binom{e}{i} t^{i} \nonumber \\
& = & \sum_{i=0}^e \binom{e+1}{i+1} \binom{e}{i} t^i - \sum_{i=0}^{e} \binom{e}{i} \binom{e}{i+1} t^{i} \nonumber \\
% & = & \sum_{i=0}^e \binom{e+1}{i+1} \binom{e}{i} t^i - \sum_{i=1}^e \binom{e}{i-1} \binom{e}{i} t^{i-1} \nonumber \\
& = & \sum_{i=0}^e \binom{e+1}{i+1} \binom{e}{i} t^i - \frac{1}{t} \sum_{i=1}^e \binom{e}{i-1} \binom{e}{i} t^i . \nonumber
%  \\& = & \sum_{i=0}^e \binom{e+1}{i+1} \binom{e}{i} t^i + \frac{1}{t} \delta_{p-2}(t) \nonumber
\end{eqnarray}
% and thus
Multiplying by $(-1)^e t$ the both sides, we have
\begin{eqnarray}
t \delta_{p-1}(t) = (-1)^e \sum_{i=0}^e \binom{e+1}{i+1} \binom{e}{i} t^{i+1} + \delta_{p-2}(t), \label{eq:tc_p-1}
\end{eqnarray}
and differentiating the both sides of \eqref{eq:tc_p-1} yields the following equality
%\[
%\delta_{p-1}(t) + t \delta_{p-1}^{\prime}(t) = \sum_{i=0}^e \binom{e+1}{i+1} \binom{e}{i} (i+1) t^i + \delta_{p-2}^{\prime}(t),
%\]
\[
\delta_{p-1}(t) + t \frac{d}{dt} \delta_{p-1}(t) = (-1)^e \sum_{i=0}^e \binom{e+1}{i+1} \binom{e}{i} (i+1) t^i + \frac{d}{dt} \delta_{p-2}(t) ,
\]
the right hand side of which is equal to $ \displaystyle (e+1)\delta_{p-1}(t) + \frac{d}{dt} \delta_{p-2}(t)$
%\[
% (e+1) \sum_{i=0}^e \binom{e}{i} \binom{e}{i} t^i + c_{p-2}^{\prime} = (e+1)c_{p-1} + c_{p-2}^{\prime}.
%\]
% \[
%  (e+1) \sum_{i=0}^e \binom{e}{i} \binom{e}{i} t^i + \frac{d}{dt} \delta_{p-2}(t) = (e+1)\delta_{p-1}(t) + \frac{d}{dt} \delta_{p-2}(t).
% \]
by \eqref{eq:delta_p-1}.
Here we have
%\[
%c_{p-2}^{\prime} = - e c_{p-1} + t c_{p-1}^{\prime} = t c_{p-1}^{\prime} - e c_{p-1},
%\]
\[
\frac{d}{dt} \delta_{p-2}(t) = 
% - e \delta_{p-1}(t) + t  \frac{d}{dt} \delta_{p-1}(t) =
 t  \frac{d}{dt} \delta_{p-1}(t) - e \delta_{p-1}(t),
\]
and by multiplying by $2$, we get
%\[
%2 c_{p-2}^{\prime} = 2 t c_{p-1}^{\prime} - (p-1) c_{p-1},
%\]
\[
2 \frac{d}{dt} \delta_{p-2}(t) = 2 t \frac{d}{dt} \delta_{p-1}(t) - (p-1) \delta_{p-1}(t),
\]
whose right hand side is equal to $\displaystyle 2 t \frac{d}{dt} \delta_{p-1}(t) + \delta_{p-1}(t)$ in characteristic $p$.

\item Since $\binom{e}{i} = - \binom{e}{i-1}+  \binom{e+1}{i}$ for $1 \leq i \leq e$, it follows from \eqref{eq:delta_p-2} that
\begin{eqnarray}
(-1)^e \delta_{p-2} (t)
%  & = &  - \left( \sum_{i=1}^{e} \binom{e}{i-1} \binom{e+1}{i} t^i - \sum_{i=1}^{e} \binom{e}{i-1} \binom{e}{i-1} t^i \right) \nonumber \\
& = & \sum_{i=1}^{e} \binom{e}{i-1} \binom{e}{i-1} t^i - \sum_{i=1}^{e} \binom{e}{i-1} \binom{e+1}{i} t^i , \label{eq:delta_p-2-2} 
\end{eqnarray}
where we factor out $t$ from the first summation of \eqref{eq:delta_p-2-2}, namely
\[
\sum_{i=1}^{e} \binom{e}{i-1} \binom{e}{i-1} t^i = t \sum_{i=0}^{e-1} \binom{e}{i} \binom{e}{i} t^i = t H_p (t) - t^{e+1}.
\]
We have
\[
(-1)^e \delta_{p-2} (t) =  t H_p (t) - t^{e+1} - \sum_{i=1}^{e} \binom{e}{i-1} \binom{e+1}{i} t^i.
\]
Multiplying by $e+1$ the both sides gives
\begin{equation}\label{eq:c_p-2}
(e+1) (-1)^e \delta_{p-2} (t) = (e + 1) \left( t H_p (t) - t^{e+1} \right) - (e + 1)\sum_{i=1}^{e} \binom{e}{i-1} \binom{e+1}{i} t^i.
\end{equation}
%\begin{eqnarray}
%(e+1) (-1)^e \delta_{p-2} (t) &=& (e + 1) \left( t H_p (t) - t^{e+1} \right) - (e + 1)\sum_{i=1}^{e} \binom{e}{i-1} \binom{e+1}{i} t^i \nonumber \\
%& = & (e+1) t H_{p} (t) - \left( (e+1) t^{e+1} + (e + 1)\sum_{i=1}^{e} \binom{e}{i-1} \binom{e+1}{i} t^i \right). \label{eq:c_p-2}
%\end{eqnarray}
Changing the start of the summation in \eqref{eq:c_p-2} from $i=1$ to $i=0$, we have
\[
(e + 1)\sum_{i=1}^{e} \binom{e}{i-1} \binom{e+1}{i} t^i = (e + 1)\sum_{i=0}^{e-1} \binom{e}{i} \binom{e+1}{i+1} t^{i+1} = \sum_{i=0}^{e-1} \binom{e+1}{i+1} \binom{e+1}{i+1} (i+1)t^{i+1}
\]
and hence the right hand side of \eqref{eq:c_p-2} is
\[
(e+1) t H_p (t) - t \sum_{i=0}^{e} \binom{e+1}{i+1}^2 (i+1)t^{i}.
\]
Thus we have
\begin{eqnarray}
(e+1) (-1)^e \delta_{p-2} (t) & = & (e+1) t H_p (t) - t \sum_{i=0}^{e} \binom{e+1}{i+1}^2 (i+1)t^{i} , \label{eq:c_p-2_2}
\end{eqnarray}
and thus it suffices to show $- \sum_{i=0}^{e} \binom{e+1}{i+1}^2 (i+1)t^{i} = (t - 1) \frac{d}{dt} H_p(t)$, where
\begin{eqnarray}
& & \sum_{i=0}^{e} \binom{e+1}{i+1}^2 (i+1) t^i = \sum_{i=0}^{e} \left( \binom{e}{i} + \binom{e}{i+1} \right)^2 (i+1) t^i \nonumber \\
& = & \sum_{i=0}^{e} \binom{e}{i}^2 (i+1) t^i + \sum_{i=0}^{e} 2 \binom{e}{i} \binom{e}{i+1} (i+1) t^i + \sum_{i=0}^{e} \binom{e}{i+1}^2 (i+1) t^i . \label{eq:total}
\end{eqnarray}
In the following, we calculate each of the three summations in \eqref{eq:total}.
By \eqref{eq:delta_p-1}, the first term of \eqref{eq:total} is
\begin{eqnarray}
\sum_{i=0}^{e} \binom{e}{i}^2 (i+1) t^i & = &  \sum_{i=0}^{e} \binom{e}{i}^2 i t^i + \sum_{i=0}^{e} \binom{e}{i}^2 t^i \nonumber \\
%& = & t \sum_{i=1}^{e} \binom{e}{i}^2 i t^{i-1} + \delta_{p-1}(t) = t \delta_{p-1}^{\prime}(t) + \delta_{p-1}(t) \label{eq:1st}
& = & t \sum_{i=1}^{e} \binom{e}{i}^2 i t^{i-1} + H_p(t) = t \frac{d}{dt} H_p (t) + H_p (t) \label{eq:1st}
\end{eqnarray}
and the third one is
\begin{eqnarray}
%\sum_{i=0}^{e} \binom{e}{i+1}^2 (i+1) t^i & = & \sum_{i=0}^{e-1} \binom{e}{i+1}^2 (i+1) t^i  = c^{\prime}_{p-1} (t). \label{eq:3rd}
\sum_{i=0}^{e} \binom{e}{i+1}^2 (i+1) t^i & = & \sum_{i=0}^{e-1} \binom{e}{i+1}^2 (i+1) t^i  = \frac{d}{dt} H_p (t). \label{eq:3rd}
\end{eqnarray}
Using the equality in the statement (2), we have that the second term of \eqref{eq:total} is
\begin{eqnarray}
\sum_{i=0}^{e} 2 \binom{e}{i} \binom{e}{i+1} (i+1) t^i & = & \sum_{i=1}^{e} 2 \binom{e}{i-1} \binom{e}{i} i t^{i-1} = - 2 \frac{d}{dt} \left( (-1)^e \delta_{p-2}(t) \right) \nonumber \\
& = & - 2 t \frac{d}{dt}\left( (-1)^e \delta_{p-1}(t) \right) - (-1)^e \delta_{p-1}(t) \nonumber \\
& = & - 2 t \frac{d}{dt}H_p (t) - H_p (t).  \label{eq:2nd}
\end{eqnarray}
It follows from \eqref{eq:1st}--\eqref{eq:2nd} that \eqref{eq:total} is equal to
% we have
\begin{eqnarray}
%\sum_{i=0}^{e} \binom{e+1}{i+1}^2 (i+1) t^i & = & \left( t c_{p-1}^{\prime} + c_{p-1} \right) 
%+ \left( - 2 t c_{p-1}^{\prime} - c_{p-1} \right) + c^{\prime}_{p-1} = (1 - t) c^{\prime}_{p-1} \nonumber
% \sum_{i=0}^{e} \binom{e+1}{i+1}^2 (i+1) t^i & = & 
\left( t \frac{d}{dt} H_p (t) + H_p (t) \right) 
+ \left( - 2 t \frac{d}{dt} H_p (t) - H_p (t) \right) + \frac{d}{dt} H_p (t) & = &(1 - t) \frac{d}{dt} H_p (t), \nonumber
\end{eqnarray}
and thus $- \sum_{i=0}^{e} \binom{e+1}{i+1}^2 (i+1)t^{i} = (t - 1) \frac{d}{dt} H_p(t)$, as desired.
\if 0
From \eqref{eq:c_p-2_2}, we finally obtain
%\[
%(e + 1) c_{p-2} = (e + 1) t c_{p-1} - t \sum_{i=0}^e \binom{e+1}{i+1}^2 (i+1) t^i = (e + 1) t c_{p-1} + t (t - 1) c_{p-1}^{\prime} .
%\]
\begin{eqnarray*}
(e + 1) \delta_{p-2}(t) &=& (e + 1) t \delta_{p-1}(t) - t \sum_{i=0}^e \binom{e+1}{i+1}^2 (i+1) t^i \\
&=& (e + 1) t \delta_{p-1}(t) + t (t - 1) \frac{d}{dt} \delta_{p-1}(t) .
\end{eqnarray*}
\fi
%Hence if $\delta_{p-1}(\alpha) =\delta_{p-2}(\alpha) = 0$ for some $\alpha \in \overline{K} \smallsetminus \{ 0, 1 \}$, then one has $\delta_{p-1}^{\prime}(\alpha) = 0$, and thus $\alpha$ is a double root of $H_p (t)= \delta_{p-1}(t)$.
\item By the equality in the third statement, if $\delta_{p-1}(\alpha) =\delta_{p-2}(\alpha) = 0$ for some $\alpha \in \overline{\F_p} \smallsetminus \{ 0, 1 \}$, then one has $\frac{d}{dt} \delta_{p-1}(\alpha) = 0$.
Since $\delta_{p-1}(t) = (-1)^e H_p (t)$ by \eqref{eq:delta_p-1}, we have $H_{p}(\alpha)=\frac{d}{dt} H_{p}(\alpha) = 0$, which means that $\alpha$ is a double root of $H_p (t)$.
This contradicts the fact shown by Igusa \cite{Igusa}
that all roots of $H_p(t)$ are simple (cf.\ \cite[Chap.\ V, Theorem 4.1 (c)]{Sil}).
\end{enumerate}
\end{proof}

Before we state the second proposition (Proposition \ref{Separatedness}),
we review, for the reader's convenience, elementary congruence relations,
which is used in the proof of the second proposition.

\begin{lem}\label{lem:factorial}
We have 
\[
(p-i)! \equiv \frac{(-1)^i}{(i-1)!} \pmod{p}
\]
for $i=1,\ldots,p$.
%the following:
%\begin{enumerate}
%\item[\rm (1)] $\displaystyle \left( e! \right)^2 \equiv (-1)^{\frac{p+1}{2}} \pmod{p}$.
%\item[\rm (1)] $\displaystyle (p-i)! \equiv \frac{(-1)^i}{(i-1)!} \pmod{p}$ for $i=1,\ldots,p$.
%\item[\rm (2)] $\displaystyle (e - i)! \equiv \frac{(-1)^{-e - i -1}}{(e + i)!} \pmod{p}$ for $i=0,\ldots,e$.
%\end{enumerate}
\end{lem}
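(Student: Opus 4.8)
The statement to prove is Lemma \ref{lem:factorial}: $(p-i)! \equiv \frac{(-1)^i}{(i-1)!} \pmod p$ for $i = 1, \ldots, p$.

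The plan is to proceed by downward induction on $i$, using Wilson's theorem as the base case. First I would establish the base case $i = 1$: Wilson's theorem states $(p-1)! \equiv -1 \pmod p$, which is exactly $(p-1)! \equiv \frac{(-1)^1}{0!} \pmod p$ since $0! = 1$. Next I would set up the inductive step: assuming $(p-i)! \equiv \frac{(-1)^i}{(i-1)!} \pmod p$, I want to deduce $(p-(i+1))! \equiv \frac{(-1)^{i+1}}{i!} \pmod p$. The key observation is the recursion $(p-i)! = (p-i) \cdot (p-i-1)!$, so that $(p-i-1)! = \frac{(p-i)!}{p-i}$, and modulo $p$ we have $p - i \equiv -i$, which is a unit in $\Z/p\Z$ for $1 \le i \le p-1$. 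Hence $(p-i-1)! \equiv \frac{(p-i)!}{-i} \equiv \frac{1}{-i}\cdot\frac{(-1)^i}{(i-1)!} = \frac{(-1)^{i+1}}{i!} \pmod p$, which is the desired formula with $i$ replaced by $i+1$.

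I should double-check the range and the meaning of the congruence. The expression $\frac{(-1)^i}{(i-1)!}$ is to be interpreted as $(-1)^i \cdot \big((i-1)!\big)^{-1}$ in $\Z/p\Z$, which makes sense precisely when $(i-1)!$ is invertible mod $p$, i.e. when $i - 1 < p$, i.e. $i \le p$; for $i = p$ the right side is $\frac{(-1)^p}{(p-1)!} \equiv \frac{-1}{-1} = 1$, and the left side is $(p-p)! = 0! = 1$, so the formula also holds at the endpoint — consistent with continuing the induction all the way to $i = p$. Thus the induction from $i$ to $i+1$ is valid for $1 \le i \le p-1$, covering all cases $i = 1, \ldots, p$.

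The argument is entirely elementary, so there is no real obstacle; the only points requiring a word of care are (a) invoking Wilson's theorem for the base case, and (b) noting that $-i$ is invertible modulo $p$ throughout the relevant range so that the division step is legitimate. I would write this up in a few lines, making the interpretation of the fractional notation explicit at the outset.
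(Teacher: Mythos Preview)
Your proof is correct and uses the same two ingredients as the paper's argument: Wilson's theorem $(p-1)!\equiv -1\pmod p$ and the congruence $p-k\equiv -k\pmod p$. The only difference is presentational: the paper carries out the computation in a single line by writing $(p-i)! = (p-1)!/\big((p-1)(p-2)\cdots(p-i+1)\big)$ and reducing each factor of the denominator modulo $p$ at once, whereas you unroll this into an induction that peels off one factor at a time. These are essentially the same approach.
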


\begin{proof}
%\begin{enumerate}
%\item[\rm (1)] Since $p - i \equiv -i \pmod{p}$ for all $i$, we have
%\begin{eqnarray}
%%( p - 1 ) ! & = & (p - 1) ( p - 2) \cdots \left( p - \frac{p-1}{2} \right) \left( p - \frac{p+1}{2} \right) ! \\ \nonumber
%%& \equiv & ( - 1) ( -2) \cdots \left( -\frac{p-1}{2} \right) \left( \frac{p-1}{2} \right) ! \pmod{p} \nonumber
%( p - 1 ) ! & = & (p - 1) ( p - 2) \cdots \left( p - e \right) \cdot \left( p - e - 1 \right) !  \equiv (- 1)^{e} e! e! \pmod{p} . \label{eq:1} 
%% & \equiv & ( - 1) ( -2) \cdots \left( -e \right) e ! \pmod{p} \nonumber \\
%% & \equiv & (- 1)^{e} e! e! \pmod{p} . \label{eq:1}
%\end{eqnarray}
%\if 0
%and thus
%\begin{eqnarray}
%%(p - 1) ! \equiv (- 1)^{\frac{p-1}{2}} \left( \frac{p-1}{2} \right) ! \left( \frac{p-1}{2} \right) ! \pmod{p} . \label{eq:1}
%(p - 1) ! \equiv (- 1)^{e} e! e! \pmod{p} . \label{eq:1}
%\end{eqnarray}
%\fi
%It follows from Wilson's theorem that the left hand side of \eqref{eq:1} is equal to $-1$ modulo $p$, and hence the required claim holds.\
%\item[\rm (1)] 
We have
\[
(p - i)! = \frac{(p-1)!}{(p-1)(p-2) \cdots \left( p -i +1\right)} \equiv \frac{-1}{(-1)(-2) \cdots (-i+1 )} = \frac{(-1)^{i}}{(i-1)!} \pmod{p},
\]
where we used Wilson's theorem $(p-1)!\equiv -1 \pmod{p}$.
% where $\frac{-1}{(-1)(-2) \cdots (-i+1 )} =  \frac{(-1)^{i}}{(i-1)!} $.
% and
% \begin{eqnarray*}
% \frac{-1}{(-1)(-2) \cdots (-i+1 )}
% &=& \frac{-1}{(-1)^{i-1} \cdot 1 \cdot 2 \cdots (i-1)} = \frac{(-1)^{i}}{(i-1)!} .
% \end{eqnarray*}
%\item[\rm (2)] The right hand side of
%\[
%(e - i)! = \left(\frac{p-1}{2} - i \right)! = \frac{(p-1)!}{(p-1)(p-2) \cdots \left(p- \frac{p+1}{2} \right) \cdots \left(p - \frac{p+1}{2} -i +1\right)}
%\]
%is equal to $\frac{-1}{(-1)(-2) \cdots (- \frac{p+1}{2}) \cdots \left(- \frac{p+1}{2} -i+1 \right)}$ modulo $p$ by Wilson's theorem, and moreover we have
%\begin{eqnarray*}
%\frac{-1}{(-1)(-2) \cdots (- \frac{p+1}{2}) \cdots \left(- \frac{p+1}{2} -i+1 \right)}
%&=& \frac{(-1)^{-\frac{p+1}{2} - i }}{ \left(\frac{p+1}{2} + i-1\right)!} = \frac{(-1)^{-e - i - 1}}{(e + i)!} .
%\end{eqnarray*}
%% \begin{eqnarray*}
%% \frac{-1}{(-1)(-2) \cdots (- \frac{p+1}{2}) \cdots \left(- \frac{p+1}{2} -i+1 \right)}
%% &=& \frac{-1}{(-1)^{\frac{p+1}{2}} \cdot 1 \cdot 2 \cdots \left(\frac{p+1}{2} \right) \cdots \left(\frac{p+1}{2} + i-1 \right)} \\
%% &=& \frac{(-1)^{-\frac{p+1}{2} - i }}{ \left(\frac{p+1}{2} + i-1\right)!} = \frac{(-1)^{-e - i - 1}}{(e + i)!} .
%% \end{eqnarray*}
%\end{enumerate}
\end{proof}

\begin{prop}\label{Separatedness}
% Let $g(x)=x(x-1)(x-t)$ and $e=(p-1)/2$, where we regard $t$ as an indeterminate.
Let $H_p(t)$ be the polynomial defined by \eqref{Hp_t}.
Let $a_i \in \overline{\F_p}$ $(i=1,\ldots,e)$ be the roots of $H_p(t)=0$, i.e.,
$H_p(t) = \prod_{i=1}^{e}(t-a_i)$.
Then we have
\[
\frac{(-1)^e e!}{(p-1)!}(g(x)^e)^{(e)} = \prod_{i=1}^e\{(t-a_i)x-(1-a_i)t\},
\]
where $(g(x)^e)^{(e)}$ denotes the $e$-th derivative of $g(x)^{e}$ with respect to $x$.
\end{prop}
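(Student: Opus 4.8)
The plan is to prove the identity by expanding both sides as elements of $\F_p[x,t]$ and matching coefficients, the matching collapsing to a single scalar congruence that follows at once from Lemma~\ref{lem:factorial}.

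First I would put the right-hand side in a usable form. Since $(t-a_i)x-(1-a_i)t=t(x-1)-a_i(x-t)$ and $H_p(s)=\prod_{i=1}^e(s-a_i)=\sum_{m=0}^e\binom em^2s^m$ (so that $\prod_{i=1}^e(Y-a_iZ)=\sum_{m=0}^e\binom em^2Y^mZ^{e-m}$ after homogenizing), substituting $Y=t(x-1)$ and $Z=x-t$ gives
\[
\prod_{i=1}^e\{(t-a_i)x-(1-a_i)t\}=\sum_{m=0}^e\binom em^2t^m(x-1)^m(x-t)^{e-m}=:\Phi.
\]
For the left-hand side I would differentiate \eqref{eq:g_e} term by term. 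Using $\frac{d^e}{dx^e}x^{e+i+j}=\frac{(e+i+j)!}{(i+j)!}x^{i+j}$ and observing that the integer $\frac{(e+i+j)!}{(i+j)!}=(i+j+1)\cdots(e+i+j)$ is divisible by $p$ exactly when $e+1\le i+j\le 2e$ (it then contains the single multiple $p=2e+1$ of $p$), one obtains in characteristic $p$
\[
(g(x)^e)^{(e)}=\sum_{i+j\le e}(-1)^{i+j}\binom ei\binom ej\frac{(e+i+j)!}{(i+j)!}\,x^{i+j}t^{e-j}.
\]
Hence $\Phi$ and $(g(x)^e)^{(e)}$ are both polynomials of degree $\le e$ in $x$ and in $t$, so it suffices to compare, for $0\le j\le n\le e$, the coefficient of $x^nt^{e-j}$ (on all other monomials both sides vanish).

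Now I would match coefficients. On the left, only the summand with this $j$ and with $i=n-j$ contributes, so the coefficient of $x^nt^{e-j}$ in $\frac{(-1)^ee!}{(p-1)!}(g(x)^e)^{(e)}$ is $\frac{(-1)^{e+n}e!}{(p-1)!}\binom e{n-j}\binom ej\frac{(e+n)!}{n!}$. On the right, expanding $(x-1)^m$ and $(x-t)^{e-m}$ in $\Phi$ and collecting, the coefficient of $x^nt^{e-j}$ equals $(-1)^{e-n}\sum_m\binom em^2\binom m{n-j}\binom{e-m}{j}$; using $\binom em\binom m{n-j}=\binom e{n-j}\binom{e-n+j}{m-n+j}$, $\binom em\binom{e-m}{j}=\binom ej\binom{e-j}{m}$ and the Chu--Vandermonde identity, this becomes $(-1)^{e-n}\binom e{n-j}\binom ej\binom{2e-n}{e-n}$. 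Since $\binom e{n-j}$ and $\binom ej$ are units modulo $p$ (their arguments being $\le e<p$), the whole proposition reduces to
\[
\frac{e!}{(p-1)!}\cdot\frac{(e+s)!}{s!}\equiv\binom{2e-s}{e-s}\pmod p\qquad(0\le s\le e),
\]
equivalently $(e!)^2(e+s)!(e-s)!\equiv(p-1)!\,s!\,(2e-s)!\pmod p$.

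Finally I would prove this congruence from Lemma~\ref{lem:factorial} with $2e=p-1$: the case $i=1$ gives $(p-1)!\equiv-1$; $i=s+1$ gives $(2e-s)!\equiv(-1)^{s+1}/s!$; $i=e-s+1$ gives $(e+s)!\equiv(-1)^{e-s+1}/(e-s)!$; and $i=e+1$ gives $e!\equiv(-1)^{e+1}/e!$, hence $(e!)^2\equiv(-1)^{e+1}$. Substituting these, both sides of the congruence reduce to $(-1)^s$, completing the proof. I expect the only delicate points to be the divisibility criterion for $\frac{(e+i+j)!}{(i+j)!}$ modulo $p$ and the Chu--Vandermonde step; granting those, the reduction to Lemma~\ref{lem:factorial} is routine bookkeeping.
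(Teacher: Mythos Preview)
Your proof is correct and follows essentially the same route as the paper: both rewrite the product $\prod_i\{(t-a_i)x-(1-a_i)t\}$ via the homogenized $H_p$, compare monomial coefficients on each side, collapse the resulting double sum with the identities $\binom em\binom m{n-j}=\binom e{n-j}\binom{e-n+j}{m-n+j}$ and $\binom em\binom{e-m}{j}=\binom ej\binom{e-j}{m}$ followed by Chu--Vandermonde to $\binom{2e-n}{e-n}$, and finish with Lemma~\ref{lem:factorial}. The only cosmetic difference is that you make the vanishing of $\frac{(e+i+j)!}{(i+j)!}$ modulo $p$ for $e<i+j\le 2e$ explicit up front (confining both sides to $x$-degree $\le e$), whereas the paper carries the full sum and lets the binomial $\binom e{m+n-e}$ enforce the same restriction.
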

\begin{proof}
% \begin{eqnarray*}
% g(x)^e &=& x^e(x-1)^e(x-t)^e\\
% &=& x^e \sum_{i=0}^e \binom{e}{i}x^i(-1)^{e-i}\sum_{j=0}^e \binom{e}{j}x^j(-t)^{e-j}\\
% &=& \sum_{i,j} (-1)^{i+j}\binom{e}{i}\binom{e}{j}x^{e+i+j}t^{e-j}.
% \end{eqnarray*}
It follows from \eqref{eq:g_e} that
\[
(g(x)^e)^{(e)}=\sum_{i,j} (-1)^{i+j}\frac{(e+i+j)!}{(i+j)!}\binom{e}{i}\binom{e}{j}x^{i+j}t^{e-j},
\]
whose coefficient of $x^m t^n$ with $0\le m,n\le e$ is
\begin{eqnarray}
(-1)^m\frac{(e+m)!}{m!}\binom{e}{m+n-e}\binom{e}{e-n}. \label{eq:coef_of_left}
\end{eqnarray}
Putting $P:=\prod_{i=1}^e\{(t-a_i)x-(1-a_i)t\}$, we have
\[
P = (x-t)^e\prod_{i=1}^e\left(\frac{t(x-1)}{x-t}-a_i\right) = (x-t)^e H_p\left(\frac{t(x-1)}{x-t}\right),
\]
whose right hand side is equal to $(x-t)^e\sum_{i=0}^e \binom{e}{i}^2 \left(\frac{t(x-1)}{x-t}\right)^i$ by \eqref{eq:delta_p-1}.
Using the binomial theorem, one obtains
% \begin{eqnarray*}
% P:=\prod_{i=1}^e\{(t-a_i)x-(1-a_i)t\} &=& (x-t)^e\prod_{i=1}^e\left(\frac{t(x-1)}{x-t}-a_i\right)\\ &=& (x-t)^e H_p\left(\frac{t(x-1)}{x-t}\right)\\
% &=& (x-t)^e\sum_{i=0}^e \binom{e}{i}^2 \left(\frac{t(x-1)}{x-t}\right)^i\\
% &=&\sum_{i=0}^e \binom{e}{i}^2 (x-t)^{e-i}t^i(x-1)^i\\
% &=&\sum_{i=0}^e \binom{e}{i}^2 t^i
% \sum_{j=0}^{e-i} \binom{e-i}{j}x^j(-t)^{e-i-j}
% \sum_{k=0}^{i} \binom{i}{k}x^k(-1)^{i-k}\\
% &=& \sum_{i=0}^e\sum_{j=0}^{e-i}\sum_{k=0}^{i}
% (-1)^{e-j-k}\binom{e}{i}^2\binom{e-i}{j}\binom{i}{k}
% x^{j+k}t^{e-j},
% \end{eqnarray*}
\begin{eqnarray*}
P&=&\sum_{i=0}^e \binom{e}{i}^2 (x-t)^{e-i}t^i(x-1)^i = \sum_{i=0}^e \binom{e}{i}^2 t^i
\sum_{j=0}^{e-i} \binom{e-i}{j}x^j(-t)^{e-i-j}
\sum_{k=0}^{i} \binom{i}{k}x^k(-1)^{i-k}\\
&=& \sum_{i=0}^e\sum_{j=0}^{e-i}\sum_{k=0}^{i}
(-1)^{e-j-k}\binom{e}{i}^2\binom{e-i}{j}\binom{i}{k}
x^{j+k}t^{e-j},
\end{eqnarray*}
whose coefficient of $x^m t^n$ is
\begin{eqnarray}
\sum_{i=m+n-e}^n (-1)^{e-m}
\binom{e}{i}^2\binom{e-i}{e-n}\binom{i}{m+n-e}. \label{eq:coef_of_right}
\end{eqnarray}
From \eqref{eq:coef_of_left} and \eqref{eq:coef_of_right}, it suffices to show
\[
\frac{e!}{(p-1)!}\frac{(e+m)!}{m!}\binom{e}{m+n-e}\binom{e}{e-n}
=\sum_{i=m+n-e}^n 
\binom{e}{i}^2\binom{e-i}{e-n}\binom{i}{m+n-e}.
\]
Since
\[
\binom{e}{i}\binom{e-i}{e-n}=\frac{e!}{i!(e-n)!(n-i)!}=\binom{e}{e-n}\binom{n}{i}
\]
and similarly
\[
\binom{e}{i}\binom{i}{m+n-e}=\frac{e!}{(e-i)!(m+n-e)!(i-m-n+e)!}
=\binom{e}{m+n-e}\binom{2e-m-n}{e-i},
\]
it suffices to show that
\begin{eqnarray}
\frac{e!}{(p-1)!}\frac{(e+m)!}{m!}
& = &\sum_{i=m+n-e}^n \binom{n}{i}\binom{2e-m-n}{e-i} \label{eq:final}
\end{eqnarray}
which we show in the following by rearranging the right hand into the left hand side.
Putting $k=i-(m+n-e)$, one has
\begin{eqnarray*}
\sum_{i=m+n-e}^n 
\binom{n}{i}\binom{2e-m-n}{e-i}
&=& \sum_{k=0}^{e-m} \binom{n}{k+m+n-e}\binom{2e-m-n}{2e-m-n-k} .
\end{eqnarray*}
% \begin{eqnarray*}
% 
% &=& \sum_{k=0}^{e-m} 
% \binom{2e-m-n}{k}\binom{(2e-m)-(2e-m-n)}{(e-m)-k},
% \end{eqnarray*}
Since $\binom{n}{k+m+n-e} = \binom{n}{n - (k+m+n-e)} = \binom{n}{(e-m)-k}$, we also have
\begin{eqnarray*}
\sum_{i=m+n-e}^n 
\binom{n}{i}\binom{2e-m-n}{e-i}
&=& \sum_{k=0}^{e-m} 
\binom{2e-m-n}{k}\binom{n}{(e-m)-k},
\end{eqnarray*}
the right hand side of which is equal to
\begin{equation}\label{result-by-Vandermonde}
\binom{2e - m}{e -m} = \frac{(2e-m)!}{(e-m)!e!}
\end{equation}
by Vandermonde's identity.
By Lemma \ref{lem:factorial}, we have
$(2e-m)! = (-1)^{m+1}/m!$ and $(e-m)! = (-1)^{e+m+1}/(e+m)!$ and
$e!=(-1)^{e+1}/e!$, whence
\eqref{result-by-Vandermonde} is equal to
\[
\frac{e!}{(p-1)!}\frac{(e+m)!}{m!}
\]
modulo $p$ by Wilson's theorem.
This is equal to the left hand side of \eqref{eq:final}.
%
%It follows from Lemma \ref{lem:factorial} that the right hand side of $\binom{2e - m}{e -m}=\frac{(2e-m)!}{(e-m)! e! }$ is equal to $\frac{(2e-m)!(e+m)!}{(-1)^{-e-m-1} e! }$ modulo $p$.
%Taking $m+1$ as $i$ in Lemma \ref{lem:factorial}, we have $(2e-m)! = (p-m-1)! \equiv \frac{(-1)^{m+1}}{m!} \pmod p$, and thus
%\[
%\binom{2e - m}{e -m} \equiv  \frac{(2e-m)!(e+m)!}{(-1)^{-e-m-1} e! } \equiv \frac{(-1)^{m+1}}{(-1)^{-e-m-1} m!} \cdot \frac{(e+m)!}{e!} \equiv (-1)^{\frac{p-1}{2}} \frac{(e+m)!}{e! m!} \pmod p.
%\]
%Finally, it follows from Lemma \ref{lem:factorial} together with Wilson's theorem that
%\[
%(-1)^{\frac{p-1}{2}} \frac{(e+m)!}{e! m!} = \frac{(-1)^{\frac{p+1}{2}}}{-1} \frac{(e+m)!}{e! m!}  \equiv \frac{e!}{(p-1)!}\frac{(e+m)!}{m!} \pmod p,
%\]
%which is equal to the left hand side of \eqref{eq:final}.
%\if 0
%\begin{eqnarray*}
%\binom{2e - m}{e -m}
%&=& \frac{(2e-m)!}{(e-m)! e! } \\
%&=& \frac{(2e-m)!(e+m)!}{(-1)^{-e-m-1} e! } \\
%&=& \frac{(2e-m)!}{(-1)^{-e-m-1}} \frac{(e+m)!}{e!}\\
%&=& \frac{(p-m-1)!}{(-1)^{-e-m-1}} \frac{(e+m)!}{e!}\\
%&=& \frac{(-1)^{m+1}}{(-1)^{-e-m-1} m!} \frac{(e+m)!}{e!}\\
%&=& (-1)^{\frac{p-1}{2}} \frac{(e+m)!}{e! m!}.
%\end{eqnarray*}
%In other hands,
%\begin{eqnarray*}
%\frac{e!}{(p-1)!}\frac{(e+m)!}{m!} 
%&=& \frac{e! (e+m)!}{ (-1) m!} \\
%&=& \frac{(e!)^2 (e+m)!}{(-1) e! m!} \\
%&=& \frac{(-1)^{\frac{p+1}{2}} (e+m)!}{(-1) e! m!} \\
%&=& (-1)^{\frac{p-1}{2}} \frac{(e+m)!}{e! m!}.
%\end{eqnarray*}
%\fi
\end{proof}

\begin{lem}\label{KeyLem}
Let $K$ be an algebraically closed field of characteristic $p$.
Let $y^2 = g_0 (x)$ be an elliptic curve over $K$, where $g_0 (x)$ is a cubic polynomial in $K[x]$.
Let $\epsilon_i$ and $\epsilon_i'$ be the $x^i$-coefficients of $g_0(x)^{(p-1)/2}$ and $(r g_0(ux+v) )^{(p-1)/2}$ respectively for $0 \leq i \leq 3 (p-1)/2$, where $r, u\in K^\times$ and $v\in K$.
Then we have the following:
\begin{enumerate}
\item[$(1)$] $\epsilon_{p-1} = 0$ if and only if $\epsilon_{p-1}^{\prime} = 0$.
\item[$(2)$] If $\epsilon_{p-1} = 0$ $($or equivalently $\epsilon_{p-1}^{\prime}= 0$$)$, then $\epsilon_{p-2}\ne 0$ if and only if $\epsilon_{p-2}^{\prime} \ne 0 $.
\item[$(3)$] If $\epsilon_{p-1} = 0$ $($or equivalently $\epsilon_{p-1}^{\prime}= 0$$)$, then $\epsilon_{p-2} \neq 0$ $($and hence $\epsilon_{p-2}' \neq 0$$)$.
\end{enumerate}
\end{lem}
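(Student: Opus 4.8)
The plan is to reduce all three parts to a single explicit base-change formula for the coefficients of $g_0(x)^{(p-1)/2}$, which is the elliptic-curve analogue of Lemma~\ref{ChangeOfVariables}, and then, for part~(3), to transport the problem to the Legendre form and invoke Proposition~\ref{KeyProp}(4).

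First I would expand, writing $e=(p-1)/2$,
\[
\bigl(r\,g_0(uX+v)\bigr)^{e}=r^{e}\sum_{j=0}^{3e}\epsilon_j(uX+v)^j
=\sum_{i=0}^{3e}\Bigl(r^{e}\sum_{j=i}^{3e}\binom{j}{i}u^i v^{j-i}\epsilon_j\Bigr)X^i ,
\]
so that $\epsilon_i'=r^{e}\sum_{j=i}^{3e}\binom{j}{i}u^iv^{j-i}\epsilon_j$. Since $3e=3(p-1)/2<2p-2$, Lucas' theorem shows that among the indices $j\le 3e$ the only one with $\binom{j}{p-1}\not\equiv 0\pmod p$ is $j=p-1$ (value $1$), and the only ones with $\binom{j}{p-2}\not\equiv 0\pmod p$ are $j=p-2$ (value $1$) and $j=p-1$ (value $-1$); the role of the inequality $3e<2p-2$ is precisely that the ``wrap-around'' indices $j=2p-2$ and $j=2p-1$ fall outside the range. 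Hence
\[
\epsilon_{p-1}'=r^{e}u^{p-1}\epsilon_{p-1},\qquad
\epsilon_{p-2}'=r^{e}u^{p-2}\bigl(\epsilon_{p-2}-v\,\epsilon_{p-1}\bigr).
\]
Because $r,u\in K^\times$, the first identity is statement~(1); and when $\epsilon_{p-1}=0$ the second one reads $\epsilon_{p-2}'=r^{e}u^{p-2}\epsilon_{p-2}$, which gives statement~(2).

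For part~(3) I would pass to the Legendre form. As $g_0$ is a separable cubic over the algebraically closed field $K$, there exist $r,u\in K^\times$ and $v\in K$ with $r\,g_0(uX+v)=X(X-1)(X-t)$ for some $t\in K\smallsetminus\{0,1\}$; with this choice $\epsilon_{p-1}'=\delta_{p-1}(t)$ and $\epsilon_{p-2}'=\delta_{p-2}(t)$ in the notation of Section~3. Assuming $\epsilon_{p-1}=0$, part~(1) gives $\delta_{p-1}(t)=0$, i.e.\ $H_p(t)=0$ by~\eqref{eq:delta_p-1}, so $t\in\overline{\F_p}$ is a root of $\delta_{p-1}$; and part~(2) gives $\epsilon_{p-2}\ne 0\iff\delta_{p-2}(t)\ne 0$. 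By Proposition~\ref{KeyProp}(4) the polynomials $\delta_{p-1}$ and $\delta_{p-2}$ have no common root, so $\delta_{p-2}(t)\ne 0$, whence $\epsilon_{p-2}\ne 0$, and then $\epsilon_{p-2}'\ne 0$ by part~(2).

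The routine work is the binomial bookkeeping in the second paragraph, where the only thing that really needs checking is the inequality $3(p-1)/2<2p-2$ (valid since $p\ge 3$) together with a couple of evaluations of $\binom{j}{p-1}$ and $\binom{j}{p-2}$. The genuine content sits entirely in Proposition~\ref{KeyProp}(4) — ultimately Igusa's theorem on the simplicity of the roots of $H_p(t)$ — which is already established, so I expect no substantive obstacle here; the only mild care point is the reduction to the Legendre form, where one must note that separability of the cubic translates into $t\neq 0,1$.
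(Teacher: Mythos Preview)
Your proof is correct and follows essentially the same approach as the paper: derive the explicit formulas $\epsilon_{p-1}'=r^{e}u^{p-1}\epsilon_{p-1}$ and $\epsilon_{p-2}'=r^{e}u^{p-2}(\epsilon_{p-2}-v\epsilon_{p-1})$ from the binomial expansion (the paper refers to Lemma~\ref{ChangeOfVariables} for this), and then for (3) reduce to the Legendre form and apply Proposition~\ref{KeyProp}(4). The only cosmetic difference is the direction of the change of variables in part~(3): the paper writes $g_0(x)=r'g(u'x+v')$ and reads off the $\delta_i$ as the unprimed coefficients, whereas you choose $r,u,v$ so that $r\,g_0(uX+v)=g(X)$ and read off the $\delta_i$ as the primed coefficients; both are equivalent.
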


\begin{proof}
First we show $(1)$ and $(2)$.
Similarly to the proof of Lemma \ref{ChangeOfVariables}, we have
\[
\epsilon_{i}'  = r^{\frac{p-1}{2}} \sum_{j=i}^{3 (p-1)/2} \binom{j}{i} u^i v^{j-i} \epsilon_{j} 
\]
for $0 \leq i \leq 3 (p-1)/2$, and in particular $\epsilon_{p-1}^{\prime}$ and $\epsilon_{p-2}^{\prime}$ are
\begin{eqnarray}
\epsilon_{p-1}' &=& r^{\frac{p-1}{2}} u^{p-1} \epsilon_{p-1} , \nonumber \\
\epsilon_{p-2}' &=& r^{\frac{p-1}{2}} u^{p-2} (\epsilon_{p-2} - v \epsilon_{p-1} ) \nonumber 
\end{eqnarray}
from which (1) and (2) follows.

Next, we show (3).
Assume $\epsilon_{p-1} = 0$.
Since $y^2 = g_0 (x)$ is isormorphic to a Legendre elliptic curve, there exist elements $r^{\prime} \neq 0$, $u^{\prime} \neq 0$ and $v^{\prime}$ in $K$ such that $g_0 (x) = r^{\prime} g (X)$ with $X := u' x  + v'$, where $g(X)=X(X-1)(X-t)$ for some $t \in K \smallsetminus \{ 0, 1 \}$.
By (1) and our assumption $\epsilon_{p-1} = 0$, the $X^{p-1}$-coefficient of $g(X)^{(p-1)/2}$ is zero, and thus the $X^{p-2}$-coefficient of $g (X)^{(p-1)/2}$ is not zero by Proposition \ref{KeyProp} (4).
It follows from (2) that $\epsilon_{p-2} \neq 0$, and we also have $\epsilon_{p-2}^{\prime} \neq 0$.
\end{proof}

\if 0
\begin{lem}\label{KeyLem}
Let $y^2 = f_0 (x)$ be an elliptic curve defined over $K$, where $f_0 (x)$ is a cubic polynomial in $K[x]$.
Let $e := (p-1)/2$.
If the $x^{p-1}$-coefficient of $f_0 (x)^{e}$ is zero, then the $x^{p-2}$-coefficient of $f_0 (x)^e$ is not zero.
\end{lem}
\begin{proof}
Since $y^2 = f_0 (x)$ is isormorphic to a Legendre elliptic curve, there exist elements $r \neq 0$, $u \neq 0$ and $v$ in $\overline{K}$ such that $f_0 (x) = r g (X)$ with $X := u x  + v$, where $g(X)=X(X-1)(X-t)$ for some $t \in \overline{K} \smallsetminus \{ 0, 1 \}$.
For each $0 \leq l \leq 3 e$, let $\delta_{l}(t)$ denote the $X^{l}$-coefficient of $g(X)^e$.
It follows from the binomial theorem that $f_0 (x)^{e}$ is equal to
\[
 \left( r g (X) \right)^{\frac{p-1}{2}} = r^{\frac{p-1}{2}} \sum_{l=0}^{\frac{3}{2}(p-1)} \delta_{l} (t) X^l
= r^{\frac{p-1}{2}} \sum_{l=0}^{\frac{3}{2}(p-1)} \sum_{k=0}^l \binom{l}{k} \delta_{l} (t) v^{l-k}(u x)^k ,
\]
and thus the $x^k$-coefficient of $f_0 (x)^{e}$ is
\[
 r^{\frac{p-1}{2}}u^k \sum_{l=k}^{\frac{3}{2}(p-1)} \binom{l}{k} \delta_{l} (t) v^{l-k}
\]
for $0 \leq k \leq 3(p-1)/2$.
In particular, the $x^{p-2}$-coefficient of $f_0 (x)^{e}$ is
\[
 r^{\frac{p-1}{2}}u^{p-2} \sum_{l=p-2}^{\frac{3}{2}(p-1)} \binom{l}{p-2} \delta_{l} (t) v^{l-(p-2)} = r^{\frac{p-1}{2}}u^{p-2} (\delta_{p-2} - v \delta_{p-1})
\]
since $\binom{l}{p-2} = 0 \pmod{p}$ for $p \leq l \leq 3(p-1)/2$.
It follows from $\binom{l}{p-1} = 0 \pmod{p}$ for $p \leq l \leq 3(p-1)/2$ that the $x^{p-1}$-coefficient of $f_0 (x)^{e}$ is
\[
 r^{\frac{p-1}{2}}u^{p-1} \sum_{l=p-1}^{\frac{3}{2}(p-1)} \binom{l}{p-1} \delta_{l} (t) v^{l-(p-1)} = r^{\frac{p-1}{2}}u^{p-1}\delta_{p-1},
\]
which is equal to zero by our assumption $\delta_{p-1} (t) = 0$ for $t \in \overline{K} \smallsetminus \{ 0, 1\}$.
By Proposition \ref{KeyProp} (4), we have $\delta_{p-2} (t) \neq 0$, and thus the $x^{p-2}$-coefficient of $f_0 (x)^{e}$ is not zero.
\end{proof}
\fi

\section{Proof of the main theorem}
Assume $p>3$.
Let $K$ be an algebraically closed field of characteristic $p$.
We use the same notation as in Section 2, i.e.,
$A_1,B_1,A_2,B_2$, $\lambda, \mu, \nu$, $f_1, f_2, f$, $a,b,c,d$ are 
as in Section 2.
We choose $A_1,B_1,A_2,B_2$ in $K$ so that
\eqref{originalE1} and \eqref{originalE2} are supersingular.
By Proposition \ref{EquationsOfSupersingularity},
it suffices to show that there exists $(\lambda,\mu,\nu)\in K^3$ 
of Howe type such that $ad-bc=0$, $ab^{p-1}+d^p=0$ and $a^p+c^{p-1}d=0$.

From now on, we regard $\lambda$, $\mu$ and $ \nu$ as intererminates,
and consider $a,b,c,d$ as polynomials in $\lambda$, $\mu$, $\nu$.
Note that $a,b,c$ and $d$ are homogeneous polynomials in $\lambda,\mu,\nu$
of degrees $2p-2$, $p-2$, $2p-1$ and $p-1$ respectively

Write $f_1^{\frac{p-1}{2}} = \sum_{k=0}^{3(p-1)/2} \alpha_k x^k$ and $f_2^{\frac{p-1}{2}} = \sum_{k=0}^{3(p-1)/2} \beta_k x^k$.
Putting
\begin{eqnarray}
F_2 (x) &:=& x^3 + A_2 \nu^2 x + B_2 \nu^3 , \label{eq:F2}
\end{eqnarray}
we can write
\[
f_2^{\frac{p-1}{2}} = (F_2^{\frac{p-1}{2}}) (x - \lambda) = \sum_{k=0}^{\frac{3}{2}(p-1)} \frac{1}{k !} \left( F_2^{\frac{p-1}{2}} \right)^{(k)} ( - \lambda ) x^k 
\]
and thus
\begin{eqnarray}
\beta_{k} & = & \frac{1}{k !} \left( F_2^{\frac{p-1}{2}} \right)^{(k)} ( - \lambda ) . \label{eq:beta_k}
\end{eqnarray}
Moreover, we denote by $\beta_k^{\prime}$ the $x^k$-coefficient of $F_2^{\frac{p-1}{2}}$, i.e., $F_2^{\frac{p-1}{2}} = \sum_{k=0}^{3(p-1)/2} \beta_k^{\prime} x^k$ with $\beta_{3(p-1)/2}^{\prime}=1 $.
Note that each coefficient $\beta_k^{\prime}$ is a term in $K[\nu]$ of degree $3 e - k$ since $F_2^{\frac{p-1}{2}}$ is homogeneous of degree $3 e$ as a polynomial in $K[x, \nu] $.
We also have
\[
\left( F_2^{\frac{p-1}{2}} \right)^{(k)} = \sum_{n=k}^{\frac{3}{2}(p-1) - k} \frac{n !}{\left( n - k \right) !} \beta_{n}^{\prime} x^{n-k}.
\]

\begin{lem}\label{lem:beta01}
The coefficient $\beta_0$ $($resp.\ $\beta_1)$ is a homogeneous polynomial in $K[\lambda, \nu]$ of degree $3 e$ $($resp.\ $3e - 1)$ with $e = (p-1)/2$, and its highest term in $\lambda$ is $(-1)^{3 e} \lambda^{3 e}$ $($resp.\ $(3 e - 1) (-1)^{3 e -1} \lambda^{3 e - 1})$.
\end{lem}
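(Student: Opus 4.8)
The plan is to deduce both assertions from a degree count in the variables $x,\lambda,\nu$ together with the specialization $\nu=0$. Treat $A_2,B_2\in K$ as constants and $x,\lambda,\nu$ as variables of degree $1$. Then $f_2(x)=(x-\lambda)^3+A_2\nu^2(x-\lambda)+B_2\nu^3$ is homogeneous of degree $3$, so $f_2^{(p-1)/2}$ is homogeneous of degree $3e$ in $K[x,\lambda,\nu]$, where $e=(p-1)/2$. As $\beta_k$ is the $x^k$-coefficient of $f_2^{(p-1)/2}$, we conclude at once that $\beta_k\in K[\lambda,\nu]$ and that $\beta_k$ is homogeneous of degree $3e-k$. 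Taking $k=0$ and $k=1$ gives the degree statements: $\beta_0$ is homogeneous of degree $3e$ and $\beta_1$ of degree $3e-1$.

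It remains to identify the leading term in $\lambda$. Since $\beta_k$ is homogeneous of degree $3e-k$ in $\lambda$ and $\nu$, its unique $\nu$-free monomial is the one in $\lambda^{3e-k}$, and that monomial equals $\beta_k|_{\nu=0}$, i.e.\ the $x^k$-coefficient of $f_2^{(p-1)/2}|_{\nu=0}$. Now $f_2|_{\nu=0}=(x-\lambda)^3$, so
\[
\left.f_2^{(p-1)/2}\right|_{\nu=0}=(x-\lambda)^{3e}=\sum_{k=0}^{3e}\binom{3e}{k}x^k(-\lambda)^{3e-k},
\]
whence the leading $\lambda$-term of $\beta_k$ is $\binom{3e}{k}(-\lambda)^{3e-k}$: for $k=0$ this is $(-1)^{3e}\lambda^{3e}$, and for $k=1$ it is $\binom{3e}{1}(-\lambda)^{3e-1}=3e\,(-1)^{3e-1}\lambda^{3e-1}$. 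These are genuinely the top-degree parts --- so that $\beta_0$ and $\beta_1$ have degrees exactly $3e$ and $3e-1$ --- because the relevant coefficients are nonzero in $K$: the coefficient is $1$ for $k=0$, and for $k=1$ it is $\binom{3e}{1}=3e=\frac{3(p-1)}{2}$, which is nonzero in characteristic $p$ precisely because $p>3$.

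I do not expect a genuine obstacle; the whole argument is a short piece of bookkeeping. The two points that call for a little care are: (i) that $A_2$ and $B_2$ must be given degree $0$ for $f_2$, and hence $f_2^{(p-1)/2}$, to be homogeneous in $x,\lambda,\nu$; and (ii) that the nonvanishing of the leading coefficient of $\beta_1$ rests on $p>3$ --- in characteristic $3$ one would have $3e\equiv 0$, and the $\lambda$-degree of $\beta_1$ would be smaller.
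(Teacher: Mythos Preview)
Your proof is correct and follows essentially the same path as the paper's: both read off the leading $\lambda$-term from the contribution of $(x-\lambda)^{3e}$, the paper via the Taylor formula $\beta_k=\tfrac{1}{k!}(F_2^{e})^{(k)}(-\lambda)$ and you, a bit more cleanly, via homogeneity in $(x,\lambda,\nu)$ together with the specialization $\nu=0$. Note that your leading coefficient $3e$ for $\beta_1$ agrees with the paper's own proof (written there as $\tfrac{3}{2}(p-1)$); the $(3e-1)$ appearing in the printed statement is a typo.
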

\begin{proof}
It follows from \eqref{eq:F2} and \eqref{eq:beta_k} that
\[
\beta_0 = \left\{ ( - \lambda )^3 + A_2 \nu^2 ( - \lambda ) + B_2 \nu_3 \right\}^{\frac{p-1}{2}} ,
\]
which shows that $\beta_0$ is homogeneous of degree $3 e$, and that its highest term in $\lambda$ is $(-1)^{3 e} \lambda^{3 e}$.
We also have
\[
\beta_1 = \sum_{n=1}^{\frac{3}{2}(p-1)} n \beta_n^{\prime} ( - \lambda )^{n-1} ,
\]
which is homogeneous of degree $3 e - 1$ if it is not zero.
It follows from $\beta_{3(p-1)/2}^{\prime}=1 $ that the highest term of $\beta_1$ in $\lambda$ is 
% Note that $c_i (p-1)=0$ since $E_1$ and $E_2$ are supersingular.
$\frac{3}{2}(p-1) (-1)^{\frac{3}{2}(p-1)-1} \lambda^{\frac{3}{2}(p-1)-1}$, which is not zero since $p > 3$.
\end{proof}

\begin{lem}\label{lem:beta_e}
The coefficient $\beta_e$ $($resp.\ $\beta_{e+1})$ with $e=(p-1)/2$ is a homogeneous polynomial in $K[\lambda, \nu]$ of degree $2 e$ $($resp.\ $2e - 1)$.
The highest terms of $\beta_e$ and $\beta_{e+1}$ in $\lambda$ are
\[
 \frac{ (p-2) !}{\left( \cfrac{p-1}{2} \right) ! \left( \cfrac{p-3}{2} \right) !} \beta_{p-2}^{\prime} ( - 1)^{\frac{p-3}{2}} \lambda^{\frac{p-3}{2}}, \quad  \mbox{and} \quad
\frac{ (p-2) !}{\left( \cfrac{p+1}{2} \right) ! \left( \cfrac{p-5}{2} \right) !} \beta_{p-2}^{\prime} ( - 1)^{\frac{p-5}{2}} \lambda^{\frac{p-5}{2}},
\]
respectively.
\end{lem}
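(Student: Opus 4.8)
The plan is to read off $\beta_e$ and $\beta_{e+1}$ directly from the expansion of $f_2^{(p-1)/2}=F_2^{(p-1)/2}(x-\lambda)$ in the coefficients $\beta_n'$ of $F_2^{(p-1)/2}$. Combining \eqref{eq:beta_k} with the Taylor expansion of $F_2^{(p-1)/2}$ (equivalently, with the displayed formula for $(F_2^{(p-1)/2})^{(k)}$) gives, for any $k$,
\[
\beta_k=\frac{1}{k!}\Bigl(F_2^{\frac{p-1}{2}}\Bigr)^{(k)}(-\lambda)=\sum_{n=k}^{3e}\binom{n}{k}\beta_n'(-\lambda)^{n-k},\qquad e=\tfrac{p-1}{2}.
\]
Since each $\beta_n'$ is a term in $K[\nu]$ of degree $3e-n$, every summand is homogeneous of degree $3e-k$ in $K[\lambda,\nu]$; specializing $k=e$ and $k=e+1$ already gives the asserted degrees $2e$ and $2e-1$ (the fact that $\beta_e,\beta_{e+1}$ are genuinely nonzero falls out of the leading‑term computation below). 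So the homogeneity statement is essentially free, and all the content is in the leading $\lambda$-coefficient.

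I would next show that every summand with $n\ge p-1$ drops out, so that the highest power of $\lambda$ comes from $n=p-2$. For $n=p-1$: $\beta_{p-1}'$ is a monomial in $\nu$, so its vanishing can be checked at $\nu=1$, where it equals the $x^{p-1}$-coefficient of $(x^3+A_2x+B_2)^{(p-1)/2}$, i.e.\ the Hasse invariant of $E_2$, which is $0$ by supersingularity; hence $\beta_{p-1}'=0$. For $p\le n\le 3e$, write $n=p+j$ with $0\le j=n-p\le 3e-p=e-1<k$; a $p$-adic valuation count ($v_p((p+j)!)=1$ and $v_p(k!)=v_p((p+j-k)!)=0$, all of $j$, $k$, $p+j-k$ lying below $p$) yields $v_p\binom{n}{k}=1$, i.e.\ $\binom{n}{k}\equiv0\pmod p$ (alternatively invoke Lucas). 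Thus $\beta_k=\sum_{n=k}^{p-2}\binom{n}{k}\beta_n'(-\lambda)^{n-k}$, whose top $\lambda$-power is at most $\lambda^{p-2-k}$, contributed only by the $n=p-2$ term, with coefficient $\binom{p-2}{k}\beta_{p-2}'(-1)^{p-2-k}$.

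Finally I would confirm this leading term is nonzero and rewrite it. By Wilson's theorem in the form of Lemma \ref{lem:factorial}, $(p-2)!\equiv1\pmod p$, so $\binom{p-2}{k}=\frac{(p-2)!}{k!(p-2-k)!}\not\equiv0\pmod p$ because $k$ and $p-2-k$ are both $<p$; and $\beta_{p-2}'\ne0$ since at $\nu=1$ it equals the $x^{p-2}$-coefficient of $(x^3+A_2x+B_2)^{(p-1)/2}$, which is nonzero by Lemma \ref{KeyLem}(3) (applicable precisely because the $x^{p-1}$-coefficient vanishes, $E_2$ being supersingular). Substituting $k=e=\frac{p-1}{2}$, so $p-2-k=\frac{p-3}{2}$, produces the leading term $\frac{(p-2)!}{(\frac{p-1}{2})!(\frac{p-3}{2})!}\beta_{p-2}'(-1)^{\frac{p-3}{2}}\lambda^{\frac{p-3}{2}}$; substituting $k=e+1=\frac{p+1}{2}$, so $p-2-k=\frac{p-5}{2}\ge0$ (here $p\ge5$ is used), produces $\frac{(p-2)!}{(\frac{p+1}{2})!(\frac{p-5}{2})!}\beta_{p-2}'(-1)^{\frac{p-5}{2}}\lambda^{\frac{p-5}{2}}$, as claimed; the nonvanishing of these terms also settles that $\beta_e,\beta_{e+1}$ have the stated degrees.

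The only real obstacle is the bookkeeping ensuring that no summand with $n\ge p-1$ survives, and within that the one non-formal point is that $\beta_{p-1}'=0$ and $\beta_{p-2}'\ne0$ encode the supersingularity of $E_2$ rather than being polynomial identities; one handles this by passing to $\nu=1$ (legitimate since homogeneity forces $\beta_n'$ to be a monomial in $\nu$) and citing Lemma \ref{KeyLem}(3). It is also worth recording that $p\ge5$ is needed so that $p-2-(e+1)=\frac{p-5}{2}\ge0$, i.e.\ so that $\beta_{e+1}$ is not a constant.
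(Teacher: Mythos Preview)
Your proof is correct and follows essentially the same route as the paper: expand $\beta_k$ in the $\beta_n'$ via \eqref{eq:beta_k}, observe the terms with $n\ge p-1$ vanish (you justify the $n\ge p$ vanishing carefully via Lucas/$p$-adic valuations, whereas the paper's displayed formula for $(F_2^{(p-1)/2})^{(k)}$ already truncates the sum at $n=p-1$), and then read off the leading $\lambda$-term at $n=p-2$ using $\beta_{p-1}'=0$, $\beta_{p-2}'\ne0$ from supersingularity and Lemma~\ref{KeyLem}. Your version is slightly more explicit about why the truncation holds and why the leading coefficient is nonzero, but the argument is the same.
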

\begin{proof}
It follows from \eqref{eq:beta_k} that
\begin{eqnarray}
\beta_{e} & = & 
\frac{1}{\left( \frac{p-1}{2} \right) !}\sum_{n=\frac{p-1}{2}}^{\frac{3}{2}(p-1) - \frac{1}{2}(p-1)} \frac{n !}{\left( n- \frac{p-1}{2} \right) !} \beta_{n}^{\prime} ( - \lambda )^{n-\frac{p-1}{2}} \nonumber \\
& = & \sum_{n=\frac{p-1}{2}}^{p-1} \frac{n !}{\left( \frac{p-1}{2} \right) ! \left( n- \frac{p-1}{2} \right) !} \beta_{n}^{\prime} ( - 1)^{n-\frac{p-1}{2}} \lambda^{n-\frac{p-1}{2}} , \nonumber
\end{eqnarray}
which is homogeneous of degree $2 e$ in $K[\lambda, \nu]$ since each $\beta_n^{\prime}$ is homogeneous of degree $3e - n$ in $K[\nu]$.
Recall that $E_2$ is supersingular, and thus $\beta_{p-1}^{\prime} = 0$ and $\beta_{p-2}^{\prime} \neq 0$ by Lemma \ref{KeyLem}.
Hence the highest term of $\beta_e$ with respect to $\lambda$ is
\begin{eqnarray}
 \frac{ (p-2) !}{\left( \cfrac{p-1}{2} \right) ! \left( \cfrac{p-3}{2} \right) !} \beta_{p-2}^{\prime} ( - 1)^{\frac{p-3}{2}} \lambda^{\frac{p-3}{2}} , \label{eq:d_lambda0}
\end{eqnarray}
and it is not zero.

We also have
\begin{eqnarray}
\beta_{e+1} & = & 
\frac{1}{\left( \frac{p+1}{2} \right) !}\sum_{n=\frac{p+1}{2}}^{\frac{3}{2}(p-1) - \frac{1}{2}(p+1)} \frac{n !}{\left( n- \frac{p+1}{2} \right) !} \beta_{n}^{\prime} ( - \lambda )^{n-\frac{p+1}{2}} \nonumber \\
& = & \sum_{n=\frac{p+1}{2}}^{p-2} \frac{n !}{\left( \frac{p+1}{2} \right) ! \left( n- \frac{p+1}{2} \right) !} \beta_{n}^{\prime} ( - 1)^{n-\frac{p+1}{2}} \lambda^{n-\frac{p+1}{2}} , \nonumber
\end{eqnarray}
which is homogeneous of degree $2 e - 1$ if it is not zero.
Since $\beta_{p-2}^{\prime} \neq 0$, the highest term of $\beta_{e+1}$ with respect to $\lambda$ is
\begin{eqnarray}
 \frac{ (p-2) !}{\left( \cfrac{p+1}{2} \right) ! \left( \cfrac{p-5}{2} \right) !} \beta_{p-2}^{\prime} ( - 1)^{\frac{p-5}{2}} \lambda^{\frac{p-5}{2}} , \label{eq:b_lambda0}
\end{eqnarray}
which is not zero. 
% modulo $p > 3$.
\end{proof}

\begin{lem}\label{Orders}
We have the following:
\begin{enumerate}
\item[\rm (1)] $\ord_\mu(a) = \frac{p+1}{2}$.
\item[\rm (2)] $\ord_\mu(c) = \frac{p+1}{2}$.
\item[\rm (3)] $\ord_\mu(ad-bc)=\ord_\nu(ad-bc)=\frac{p+1}{2}$.
\item[\rm (4)] $\frac{a d - b c}{(\mu \nu )^{\frac{p+1}{2}} } \equiv B \lambda^{2 p - 4} \bmod{(\mu, \nu )}$ for some constant $B \in K^{\times}$.
\end{enumerate}
\end{lem}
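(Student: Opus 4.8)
The plan is to reduce everything to the coefficients of $f_1^{(p-1)/2}=\sum_k\alpha_kx^k$ and $f_2^{(p-1)/2}=\sum_\ell\beta_\ell x^\ell$ through $\gamma_i=\sum_{k+\ell=i}\alpha_k\beta_\ell$. The preliminary observation is that $f_1$ is homogeneous of degree $3$ in $(x,\mu)$, so $\alpha_k=c_k\mu^{3e-k}$ with $e=(p-1)/2$ and $c_k\in K$ the $x^k$-coefficient of $(x^3+A_1x+B_1)^{(p-1)/2}$; here $c_{3e}=1$, while $c_{p-1}=0$ (hence $\alpha_{p-1}=0$) and $c_{p-2}\ne 0$ because $E_1$ is supersingular (by Lemma~\ref{KeyLem}), and $3e-(p-2)=\tfrac{p+1}{2}$; none of the $\alpha_k$ involves $\nu$. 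Since $a=\gamma_{p-1}=\sum_{k=0}^{p-2}\alpha_k\beta_{p-1-k}$ (the term $k=p-1$ drops out), each summand has $\mu$-order $\ge 3e-(p-2)=\tfrac{p+1}{2}$, with equality only for $k=p-2$, so the $\mu^{(p+1)/2}$-coefficient of $a$ is $c_{p-2}\beta_1$, which is nonzero since $\beta_1\ne 0$ by Lemma~\ref{lem:beta01} (here $p>3$ is used). This gives (1), and (2) is identical with $c=\gamma_{p-2}=\sum_{k=0}^{p-2}\alpha_k\beta_{p-2-k}$, whose $\mu^{(p+1)/2}$-coefficient is $c_{p-2}\beta_0\ne 0$. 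The same bookkeeping shows that the $\mu^0$-coefficients of $d=\gamma_{2p-2}$ and $b=\gamma_{2p-1}$ come from $k=3e$ and equal $\alpha_{3e}\beta_e=\beta_e$ and $\alpha_{3e}\beta_{e+1}=\beta_{e+1}$, both nonzero by Lemma~\ref{lem:beta_e}; in particular $\ord_\mu(b)=\ord_\mu(d)=0$.

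For (3) I would write $a=\mu^{(p+1)/2}\bar a$ and $c=\mu^{(p+1)/2}\bar c$ with $\bar a|_{\mu=0}=c_{p-2}\beta_1$, $\bar c|_{\mu=0}=c_{p-2}\beta_0$, so that $ad-bc=\mu^{(p+1)/2}(\bar a\,d-b\,\bar c)$ has $\mu^{(p+1)/2}$-coefficient $c_{p-2}(\beta_1\beta_e-\beta_{e+1}\beta_0)$; thus it suffices to show this polynomial in $\lambda,\nu$ is nonzero. Both $\beta_1\beta_e$ and $\beta_{e+1}\beta_0$ have $\lambda$-degree $4e-2$, and by Lemmas~\ref{lem:beta01} and \ref{lem:beta_e} the leading-in-$\lambda$ coefficients of $\beta_1,\beta_e,\beta_{e+1},\beta_0$ are (up to the common sign $(-1)^{4e-2}=1$ in each product, and the common nonzero monomial $\beta_{p-2}'=c'\nu^{e+1}$, where $c'$ is the $x^{p-2}$-coefficient of $(x^3+A_2x+B_2)^{(p-1)/2}$, nonzero since $E_2$ is supersingular) governed respectively by $3e$, $\binom{p-2}{e}$, $\binom{p-2}{e+1}$ and $1$, so that the $\lambda^{4e-2}$-coefficient of $\beta_1\beta_e-\beta_{e+1}\beta_0$ equals
\[
c'\Bigl(3e\binom{p-2}{e}-\binom{p-2}{e+1}\Bigr)\nu^{e+1}
 = c'\binom{p-2}{e}\,\frac{3e^2+2e+1}{e+1}\,\nu^{e+1},
\]
using $\binom{p-2}{e+1}=\binom{p-2}{e}\frac{e-1}{e+1}$. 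This is nonzero: $c'\ne 0$; $\binom{p-2}{e}\not\equiv 0\pmod p$ because $p-2<p$; $e+1\equiv 1/2\not\equiv 0$; and $3e^2+2e+1\equiv 3/4\not\equiv 0\pmod p$ precisely because $p>3$ (as $2e\equiv-1$ and $e^2\equiv 1/4$). Hence $\ord_\mu(ad-bc)=\tfrac{p+1}{2}$. For $\ord_\nu(ad-bc)$ I would use the substitution $x\mapsto X+\lambda$: by Lemma~\ref{ChangeOfVariables} with $u=1$, $v=\lambda$ (so $\det P=1$), the Cartier--Manin matrix $\tilde M$ of $f(X+\lambda)=F_2(X)\cdot f_1(X+\lambda)$ satisfies $\det\tilde M=\det M=ad-bc$; but $F_2(X)\cdot f_1(X+\lambda)$ has exactly the shape of $f$ with $(A_1,B_1,\mu)$ and $(A_2,B_2,\nu)$ interchanged and $\lambda$ replaced by $-\lambda$, while $E_1,E_2$ are both supersingular with $p>3$, so the argument just given, applied to this configuration with $\mu$ and $\nu$ interchanged, yields $\ord_\nu(\det\tilde M)=\tfrac{p+1}{2}$, i.e. $\ord_\nu(ad-bc)=\tfrac{p+1}{2}$.

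For (4): by (1)--(3) and the coprimality of $\mu$ and $\nu$ in $K[\lambda,\mu,\nu]$, $(\mu\nu)^{(p+1)/2}$ divides $ad-bc$, so $Q:=(ad-bc)/(\mu\nu)^{(p+1)/2}$ lies in $K[\lambda,\mu,\nu]$ and is homogeneous of degree $3(p-1)-(p+1)=2p-4$; hence $Q\equiv B\lambda^{2p-4}\pmod{(\mu,\nu)}$ for some $B\in K$, and $B\lambda^{2p-4}$ is precisely the coefficient of $\mu^{(p+1)/2}\nu^{(p+1)/2}$ in $ad-bc$, i.e. the coefficient of $\nu^{(p+1)/2}$ in $c_{p-2}(\beta_1\beta_e-\beta_{e+1}\beta_0)$. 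Since $\beta_1\beta_e-\beta_{e+1}\beta_0$ is homogeneous of degree $5e-1$ in $(\lambda,\nu)$, its $\nu^{(p+1)/2}=\nu^{e+1}$-part is exactly $\lambda^{4e-2}$ times the coefficient computed in the previous paragraph, and $4e-2=2p-4$. Therefore $B=c_{p-2}c'\binom{p-2}{e}\frac{3e^2+2e+1}{e+1}\in K^\times$, proving (4).

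The crux of the lemma is the single computation showing that $\beta_1\beta_e-\beta_{e+1}\beta_0$, the $\mu^{(p+1)/2}$-component of $ad-bc$, is nonzero and identifying its leading $\lambda$-coefficient; this is where Lemmas~\ref{lem:beta01} and \ref{lem:beta_e} are indispensable and where the hypothesis $p>3$ enters decisively, through $3e^2+2e+1\equiv 3/4\not\equiv 0\pmod p$. Everything else is bookkeeping: the $\mu$-orders of the individual entries $a,b,c,d$, the divisibility-and-degree argument for $Q$, and the reduction of the $\nu$-order statement to the $\mu$-order statement via $x\mapsto X+\lambda$.
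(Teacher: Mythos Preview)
Your proof is correct and follows essentially the same approach as the paper: the same decomposition $\gamma_i=\sum_k\alpha_k\beta_{i-k}$ with $\alpha_k=c_k\mu^{3e-k}$, the same appeal to Lemmas~\ref{lem:beta01}, \ref{lem:beta_e} and \ref{KeyLem}, the same swap $x\mapsto X+\lambda$ via Lemma~\ref{ChangeOfVariables} to handle $\ord_\nu$, and the same identification of the leading $\lambda$-term of $\beta_1\beta_e-\beta_0\beta_{e+1}$. The only cosmetic differences are that you write the key constants as $\binom{p-2}{e}$ and $\binom{p-2}{e+1}$ rather than as ratios of factorials, and you verify non-vanishing via $3e^2+2e+1\equiv 3/4\not\equiv 0\pmod p$ (making the use of $p>3$ explicit) where the paper derives the contradiction $1\equiv 2\pmod p$ after an implicit division by~$3$.
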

\begin{proof}
\begin{enumerate}
\item[\rm (1)] We claim that there exists $\tilde{\alpha}_{k} \in K$ such that $\alpha_k = \mu^{3e - k} \tilde{\alpha}_{k}$ for each $0 \leq k \leq 3(p-1)/2$.
Indeed, we have
\begin{eqnarray}
\displaystyle 
\alpha_{k} & = & \sum_{3 n_0 + n_1=k} \frac{e!}{{n_0}! {n_1}! (e - n_0 - n_1)!} (A_1 \mu^2)^{n_1} (B_1 \mu^3)^{e - n_0 - n_1} \nonumber \\
& = & \left( \sum_{3 n_0 + n_1=k} \frac{e!}{{n_0}! {n_1}! (e - n_0 - n_1)!} A_1^{n_1} B_1^{e - n_0 - n_1} \right) \mu^{3 e - k} \nonumber
\end{eqnarray}
with $e = (p-1)/2$, and thus $\alpha_{k}$ is divided by $\mu^{3 e - k}$.
Putting $\tilde{\alpha}_{k} := \alpha_{k} / (\mu^{3 e - k})$, we also have $\tilde{\alpha}_k \in K$ for $0 \leq k \leq 3(p-1)/2$.
% the degree of $\alpha_k$ on $\mu$ is equal to $3e - k$ if $\alpha_k \neq 0$ for $0 \le k \le 3(p-1)/2$.
Since both the elliptic curves $E_1 : z^2 = f_1$ and $E_2 : w^2 = f_2$ are supersingular, we have $\alpha_{p-1} = 0$ and $\beta_{p-1} = 0$, whereas $\alpha_{p-2} \neq 0$ (and thus $\tilde{\alpha}_{p-2} \neq 0$) and $\beta_{p-2} \neq 0$ by Lemma \ref{KeyLem}.
It follows from
\[
\displaystyle a = \gamma_{p-1} = \sum_{k=1}^{p-2} \alpha_{k} \beta_{p-1-k} = \sum_{k=1}^{p-2} \mu^{3 e - k} \tilde{\alpha}_{k} \beta_{p-1-k} 
= \sum_{j=\frac{p+1}{2}}^{\frac{3 p -5}{2}} \mu^{j} \tilde{\alpha}_{3 e - j} \beta_{j - e} ,
\]
where $\beta_k$ is a polynomial in $K [\lambda, \nu]$ for each $0 \leq k \leq 3 e$.
% ($\tilde{\alpha}_{k} \in K$.)
% If $1 \le k \le p-2$, then $\frac{p+1}{2} \le 3e -k \le \frac{3p-5}{2}$.
Since $\beta_1 \neq 0$ by Lemma \ref{lem:beta01}, we have $\tilde{\alpha}_{3 e - j} \beta_{j-e} = \tilde{\alpha}_{p-2} \beta_{1} \neq 0 $ for $j = (p+1)/2$, and thus $\ord_{\mu} (a) = (p+1)/2$.
%  is divided by $\mu^{\frac{p+1}{2}}$.
\item[\rm (2)]
Similarly to the proof of (1), one has
\[
\displaystyle c = \gamma_{p-2} = \sum_{k=0}^{p-2} \alpha_{k} \beta_{p-2-k} = \sum_{k=0}^{p-2} \mu^{3e - k} \tilde{\alpha}_{k} \beta_{p-2-k}
= \sum_{j=\frac{p+1}{2}}^{\frac{3 p - 3}{2}} \mu^{j} \tilde{\alpha}_{3 e - j} \beta_{j - e - 1}.
\]
Since we have $\tilde{\alpha}_{p-2}\neq 0$, and $\beta_0 \neq 0$ by Lemma \ref{lem:beta01}, we also have $\tilde{\alpha}_{3 e - j} \beta_{j-e-1} = \tilde{\alpha}_{p-2} \beta_{0} \neq 0 $ for $j = (p+1)/2$, and thus $\ord_{\mu} (c) = (p+1)/2$.
% If $0 \le k \le p-2$, then $\frac{p+1}{2} \le 3e -k \le \frac{3p-3}{2}$.
% Thus $a$ is divided by $\mu^{\frac{p+1}{2}}$.
\item[\rm (3)]
Similarly to the proof of (1), remaining two entries of the Cartier-Manin matrix $M$ are written as
\begin{eqnarray*}
b &=& \gamma_{2p-1} = \sum_{k=\frac{p+1}{2}}^{\frac{3}{2}(p-1)} \alpha_{k} \beta_{2p-1-k} = \sum_{k=\frac{p+1}{2}}^{\frac{3}{2}(p-1)} \mu^{3e - k} \tilde{\alpha}_{k} \beta_{2p-1-k}
= \sum_{j=0}^{p-2} \mu^{j} \tilde{\alpha}_{3 e - j} \beta_{j + e + 1}, \\
d &=& \gamma_{2p-2} = \sum_{k=\frac{p-1}{2}}^{\frac{3}{2}(p-1)} \alpha_{k} \beta_{2p-2-k} = \sum_{k=\frac{p-1}{2}}^{\frac{3}{2}(p-1)} \mu^{3e - k} \tilde{\alpha}_{k} \beta_{2p-2-k}
=\sum_{j=0}^{p-1} \mu^{j} \tilde{\alpha}_{3e - j} \beta_{j+e},
\end{eqnarray*}
both of which are not divided by $\mu$ since $\alpha_{3 e} = 1$ and since $\beta_{e}, \beta_{e+1} \neq 0$ by Lemma \ref{lem:beta_e}.
Thus, if the coefficient of $\mu^{(p+1)/2}$ in $ad-bc$ is not zero, then we have $\ord_{\mu}(a d - b c) = (p+1)/2$.
% \textcolor{red}{Since $\mu^{3e - k}$ is equal to 1 if $k = \frac{p-1}{2}$, $b$ and $d$ are not divided by $\mu$.
% Thus $ad - bc$ is divided by $\mu^{\frac{p+1}{2}}$.}
By straightforward computation, the coefficients of $\mu^{(p+1)/2}$ in $ad$ and $bc$ are $\tilde{\alpha}_{p-2} \beta_1 \tilde{\alpha}_{3 e} \beta_e$ and $\tilde{\alpha}_{p-2} \beta_0 \tilde{\alpha}_{3 e} \beta_{e+1}$, respectively.
Here we have
\[
\tilde{\alpha}_{p-2} \beta_1 \tilde{\alpha}_{3 e} \beta_e - \tilde{\alpha}_{p-2} \beta_0 \tilde{\alpha}_{3 e} \beta_{e+1} 
= \tilde{\alpha}_{p-2} \tilde{\alpha}_{3 e} ( \beta_1 \beta_e - \beta_0 \beta_{e+1}),
\]
where $\tilde{\alpha}_{p-2} \neq 0$ and $\tilde{\alpha}_{3 e} = \alpha_{3 e} = 1$.
If $\beta_1 \beta_e - \beta_0 \beta_{e+1} \neq 0$, we have $\ord_{\mu}(ad - b c)= (p+1)/2$.
% Recall from the beginning of this section that $\beta_{k}$ is homogeneous of degree $3 e - k$ in $K[\lambda, \nu]$, 
By Lemmas \ref{lem:beta01} and \ref{lem:beta_e}, the highest term of $\beta_1 \beta_{e}$ in $\lambda$ is
\begin{eqnarray}
& & \frac{3}{2}(p-1) (-1)^{\frac{3 p-5}{2}} \lambda^{\frac{3 p-5}{2}} \frac{ (p-2) !}{\left( \cfrac{p-1}{2} \right) ! \left( \cfrac{p-3}{2} \right) !} \beta_{p-2}^{\prime} ( - 1)^{\frac{p-3}{2}} \lambda^{\frac{p-3}{2}} \nonumber \\
 & = &  \frac{3}{2}(p-1) \frac{ (p-2) !}{\left( \cfrac{p-1}{2} \right) ! \left( \cfrac{p-3}{2} \right) !} \beta_{p-2}^{\prime} (-1)^{2 p - 4} \lambda^{2 p - 4}, \nonumber
\end{eqnarray}
and that of $\beta_{0} \beta_{e+1}$ is
\[
(-1)^{\frac{3 p - 3}{2}} \lambda^{\frac{3 p - 3}{2}} \frac{ (p-2) !}{\left( \cfrac{p+1}{2} \right) ! \left( \cfrac{p-5}{2} \right) !} \beta_{p-2}^{\prime} ( - 1)^{\frac{p- 5}{2}} \lambda^{\frac{p- 5}{2}}
 =  \frac{ (p-2) !}{\left( \cfrac{p+1}{2} \right) ! \left( \cfrac{p -5}{2} \right) !} \beta_{p-2}^{\prime} (-1)^{2 p - 4} \lambda^{2 p - 4} . 
\]
Since $\beta_{p-2}' \neq 0$ by Lemma \ref{KeyLem}, it suffices to show 
\[
\frac{3}{2}(p-1) \frac{ (p-2) !}{\left( \cfrac{p-1}{2} \right) ! \left( \cfrac{p - 3}{2} \right) !}
\neq \frac{ (p-2) !}{\left( \cfrac{p+1}{2} \right) ! \left( \cfrac{p - 5}{2} \right) !} \pmod{p}.
\]
If the equality holds, we have
\[
\frac{\frac{3}{2}(p-1)}{\frac{1}{2}(p-3)} = \frac{1}{\frac{1}{2}(p+1)} \pmod {p}
\]
%and thus $1 \equiv 2^{-1} \pmod{p}$.
and thus $1 \equiv 2 \pmod{p}$.
This is a contradiction.
% Hence the highest term of $a d - b c$ with respect to $\lambda$ is $B \lambda^{2 p - 4}$ for some constant $B \in K^{\times}$, and so
% \[
% \frac{a d - b c}{(\mu \nu )^{\frac{p+1}{2}} } \equiv B \lambda^{2 p - 4} \bmod{(\mu, \nu )} .
% \]
% and thus $\beta_{1} \beta_e$ and $\beta_0 \beta_{e-2}$ are homogeneous of degrees $5 e  - 1$ and $5 e + 2$.

\indent Next, we show $\ord_{\nu} (a d - b c) = (p+1)/2$.
To show this, we consider the transformation $X = x - \lambda$.
% that the polynomial $ad -bc$ does not change by the transformation $X = x - \lambda$.
The polynomials $f_1 (x)$ and $f_2 (x)$ are rewritten as
\begin{eqnarray*}
f_1^{\prime} (X)  &:= & (X + \lambda)^3 + A_1 \mu^2 (X + \lambda )+ B_1\mu^3, \\
f_2^{\prime} (X)  &:= & X^3 + A_2 \nu^2X + B_2 \nu^3
\end{eqnarray*}
respectively.
% we have
%\[ 
%\displaystyle (f_1(x) f_2(x))^{\frac{p-1}{2}} = 
%(f_1^{\prime}(X) f_2^{\prime}(X))^{\frac{p-1}{2}} = 
%\sum_{l=0}^{3(p-1)} \gamma^{\prime}_l X^l = 
%\sum_{l=0}^{3(p-1)} \sum_{k = 0}^{l} 
%\binom{l}{k} \gamma^{\prime}_l (-\lambda)^{l-k} x^k, \]
%where $\gamma_{l}^{\prime}$ denotes 
%the coefficient of $X^{l}$ in $(f_1^{\prime}(X) f_2^{\prime}(X))^{(p-1)/2}$.
Let $\gamma_{l}^{\prime}$ denote
the coefficient of $X^{l}$ in $(f_1^{\prime}(X) f_2^{\prime}(X))^{(p-1)/2}$.
% Since $(f_1(x) f_2(x))^{(p-1)/2} = (f_1(X) f_2(X))^{(p-1)/2}$, we have
%Recall from (10) that we have
%\[ \gamma_k = \sum_{l=k}^{3(p-1)} \binom{l}{k} \gamma^{\prime}_l (-\lambda)^{l-k}, \]
%and hence
%\begin{eqnarray*}
%a &=& \gamma_{p-1} = \sum_{l=p-1}^{3(p-1)} \binom{l}{p-1} \gamma^{\prime}_l (-\lambda)^{l-(p-1)} = \gamma^{\prime}_{p-1}, \\
%c &=& \gamma_{p-2} = \sum_{l=p-2}^{3(p-1)} \binom{l}{p-2} \gamma^{\prime}_l (-\lambda)^{l-(p-2)} = \gamma^{\prime}_{p-2} + \lambda \gamma^{\prime}_{p-1}, \\
%b &=& \gamma_{2p-1} = \sum_{l=2p-1}^{3(p-1)} \binom{l}{2p-1} \gamma^{\prime}_l (-\lambda)^{l-(2p-1)} = \gamma^{\prime}_{2p-1}, \\
%d &=& \gamma_{2p-2}  = \sum_{l=2p-2}^{3(p-1)} \binom{l}{2p-2} \gamma^{\prime}_l (-\lambda)^{l-(2p-2)} = \gamma^{\prime}_{2p-2} + \lambda \gamma^{\prime}_{2p-1}.
%\end{eqnarray*}
Putting $a^{\prime} = \gamma^{\prime}_{p-1}$, $c^{\prime} = \gamma^{\prime}_{p-2}$, $b^{\prime} = \gamma^{\prime}_{2p-1}$ and $d^{\prime} = \gamma^{\prime}_{2p-2}$, we have $ad - bc =a^{\prime} d^{\prime} -b^{\prime} c^{\prime}$
by Lemma \ref{ChangeOfVariables}.
%\[ ad - bc =
%\gamma^{\prime}_{p-1}
%( \gamma^{\prime}_{2p-2} + \lambda \gamma^{\prime}_{2p-1})
%-  \gamma^{\prime}_{2p-1} (\gamma^{\prime}_{p-2} 
%+ \lambda \gamma^{\prime}_{p-1}) = 
%a^{\prime} d^{\prime} -b^{\prime} c^{\prime} \]
By the same argument as the proof of $\ord_{\mu}(ad - bc) = (p+1)/2$ for $f_1(x)$ and $f_2(x)$, we have $\ord_{\nu}(a' d' - b' c') = (p+1)/2$ for $f_1^{\prime} (X)$ and $f_2^{\prime}(X)$, and thus $\ord_{\nu}(ad-bc) = (p+1)/2$.

\item[\rm (4)] From the first part of the proof of (3), the coefficient of $(\mu \nu )^{\frac{p+1}{2}}$ in $a d - b c$ is $B \lambda^{2 p - 4} $ with
\[
B:= \tilde{\alpha}_{p-2} \tilde{\alpha}_{3 e} ( \beta_{p-2}^{\prime} / \nu^{\frac{p+1}{2}} )
\left(  \frac{ \frac{3}{2}(p-1) \cdot (p-2) !}{\left( \cfrac{p-1}{2} \right) ! \left( \cfrac{p-3}{2} \right) !}  
- \frac{ (p-2) !}{\left( \cfrac{p+1}{2} \right) ! \left( \cfrac{p -5}{2} \right) !} \right) (-1)^{2 p - 4} ,
\]
which is not zero.
Recall from the proof of (1) that $\tilde{\alpha}_{p-2}$ and $\tilde{\alpha}_{3 e}$ are non-zero constants in $k$.
Recall also from the beginning of this section that $\beta_{p-2}^{\prime}$ is a term in $K[\nu]$ of degree $3 e - (p-2) = (p+1)/2$, and thus $( \beta_{p-2}^{\prime} / \nu^{\frac{p+1}{2}} ) \in K^\times$.
Thus the claim holds.
\end{enumerate}
\end{proof}

Let $R:=K[\lambda,\mu,\nu]$ and put
\begin{eqnarray*}
h_0 &:=& (ad-bc)/(\mu\nu)^{(p+1)/2},\\
h_1 &:=& ab^{p-1}+d^p,\\
h_2 &:=& a^p+c^{p-1}d,
\end{eqnarray*}
which belong to $R$.
Since $a,b,c$ and $d$ are homogeneous polynomials in $\lambda,\mu,\nu$
of degrees $2p-2$, $p-2$, $2p-1$ and $p-1$ respectively,
%Setting $\deg(\lambda)=\deg(\mu)=\deg(\nu)=1$, we have
%$\deg(\gamma_i)=3(p-1)-i$ and in particular
%\[
%\deg(a)=2p-2,\quad
%\deg(b)=p-2,\quad
%\deg(c)=2p-1,\quad
%\deg(d)=p-1.
%\]
we have that $h_0$, $h_1$ and $h_2$ are homogeneous
of degrees
$2p-4$, $p(p-1)$ and $2p(p-1)$ respectively.
Consider the homogeneous ideal
\[
I = \langle h_0, h_1, h_2 \rangle
\]
of $K[\lambda,\mu,\nu]$.
%Let $V$ be the closed subscheme defined by $h=0$ in $\Proj(K[\lambda,\mu, \nu])$.

\if 0
\begin{lem}\label{lambda-term}
\begin{enumerate}
\item[$(1)$] The highest term $a_{\lambda}$ $($resp.\ $c_{\lambda})$ of $a$ $(c)$ with respect to $\lambda$ appears in ...
\item[$(2)$] The highest term $d_{\lambda}$ $($resp.\ $b_{\lambda})$ of $d$ $($resp.\ $b)$ with respect to $\lambda$ appears in $\beta_{\frac{p-1}{2}}$ $($resp.\  $\beta_{\frac{p+1}{2}})$.
\item[$(3)$] The highest term of $\frac{a d - b c}{( \mu \nu )^{\frac{p-1}{2}}}$ with respect to $\lambda$ is $B \lambda^{2 p - 4} (\mu \nu )^{\frac{p+1}{2}}$ for some constant $B \in K^{\times}$, and so
\[
\frac{a d - b c}{(\mu \nu )^{\frac{p+1}{2}} } \equiv B \lambda^{2 p - 4} \bmod{(\mu, \nu )} .
\]
\end{enumerate}
\end{lem}
\begin{proof}
\begin{enumerate}
\item
\item
\[
f_2^{\frac{p-1}{2}} = (F_2^{\frac{p-1}{2}}) (x - \lambda) = \sum_{n=0}^{\frac{3}{2}(p-1)} \frac{1}{n !} \left( F_2^{\frac{p-1}{2}} \right)^{(n)} ( - \lambda ) x^n
\]
where $F_2 (x) := x^3 + A_2 \nu^2 x + B_2 \nu_3$.
Put $\beta(n):= \frac{1}{n !} \left( F_2^{\frac{p-1}{2}} \right)^{(n)} ( - \lambda )$.
We denote by $c_i (n)$ the $x^n$-coefficient of $F_i^{\frac{p-1}{2}}$, i.e., $F_2^{\frac{p-1}{2}} = \sum_{n=0}^{\frac{3}{2}(p-1)} c_2 (n) x^n$ with $c_2 (n) =1 $ for $n = \frac{3}{2}(p-1)$.
Note that $c_i (p-1)=0$ since $E_1$ and $E_2$ are supersingular.
\[
\beta( 0 ) = \left\{ ( - \lambda )^3 + A_2 \nu^2 ( - \lambda ) + B_2 \nu_3 \right\}^{\frac{p-1}{2}}
\]
Since the highest term of
\[
\beta( 1 ) = \sum_{n=1}^{\frac{3}{2}(p-1)} n c_2 (n) ( - \lambda )^{n-1}
\]
is $\frac{3}{2}(p-1) (-1)^{\frac{3}{2}(p-1)-1} \lambda^{\frac{3}{2}(p-1)-1}$, that of $a$ with respect to $\lambda$ is
\begin{eqnarray}
a_{\lambda} & = & c_1 (p-2 ) \frac{3}{2}(p-1) (-1)^{\frac{3}{2}(p-1)-1} \lambda^{\frac{3}{2}(p-1)-1} = \frac{3}{2}(p-1) c_1 (p-2 ) (-1)^{\frac{3}{2}p - \frac{5}{2}} \lambda^{\frac{3}{2}p - \frac{5}{2}}. \label{eq:a_lambda}
\end{eqnarray}
The highest term $c_{\lambda}$ of $c$ with respect to $\lambda$ is
\begin{eqnarray}
c_{\lambda} & = & c_1 (p-2 ) (-1)^{\frac{3}{2}(p-1)} \lambda^{\frac{3}{2}(p-1)}
= c_1 (p-2 ) (-1)^{\frac{3}{2}p- \frac{3}{2}} \lambda^{\frac{3}{2}p-\frac{3}{2}}. \label{eq:c_lambda}
\end{eqnarray}
The highest term $d_{\lambda}$ of $d$ with respect to $\lambda$ appears in
\begin{eqnarray}
\beta \left( \frac{p-1}{2} \right) & = & 
\frac{1}{\left( \frac{p-1}{2} \right) !}\sum_{n=\frac{p-1}{2}}^{\frac{3}{2}(p-1) - \frac{1}{2}(p-1)} \frac{n !}{\left( n- \frac{p-1}{2} \right) !} c_2 (n) ( - \lambda )^{n-\frac{p-1}{2}} \nonumber \\
& = & \sum_{n=\frac{p-1}{2}}^{p-1} \frac{n !}{\left( \frac{p-1}{2} \right) ! \left( n- \frac{p-1}{2} \right) !} c_2 (n) ( - 1)^{n-\frac{p-1}{2}} \lambda^{n-\frac{p-1}{2}} . \nonumber
\end{eqnarray}
Recall that $E_2$ is supersingular, and thus it follows from $c_2 (p-1) = 0$ that
\begin{eqnarray}
d_{\lambda} = \frac{ (p-2) !}{\left( \cfrac{p-1}{2} \right) ! \left( \cfrac{p}{2} - \cfrac{3}{2} \right) !} c_2 (p-2) ( - 1)^{\frac{p}{2} - \frac{3}{2}} \lambda^{\frac{p}{2} - \frac{3}{2}} . \label{eq:d_lambda}
\end{eqnarray}
Similarly, the highest term $b_{\lambda}$ of $b$ with respect to $\lambda$ appears in
\begin{eqnarray}
\beta \left( \frac{p+1}{2} \right) & = & 
\frac{1}{\left( \frac{p+1}{2} \right) !}\sum_{n=\frac{p+1}{2}}^{\frac{3}{2}(p-1) - \frac{1}{2}(p+1)} \frac{n !}{\left( n- \frac{p+1}{2} \right) !} c_2 (n) ( - \lambda )^{n-\frac{p+1}{2}} \nonumber \\
& = & \sum_{n=\frac{p+1}{2}}^{p-2} \frac{n !}{\left( \frac{p+1}{2} \right) ! \left( n- \frac{p+1}{2} \right) !} c_2 (n) ( - 1)^{n-\frac{p+1}{2}} \lambda^{n-\frac{p+1}{2}}, \nonumber
\end{eqnarray}
and thus we have
\begin{eqnarray}
b_{\lambda} = \frac{ (p-2) !}{\left( \cfrac{p+1}{2} \right) ! \left( \cfrac{p}{2} - \cfrac{5}{2} \right) !} c_2 (p-2) ( - 1)^{\frac{p}{2} - \frac{5}{2}} \lambda^{\frac{p}{2} - \frac{5}{2}} . \label{eq:b_lambda}
\end{eqnarray}
By \eqref{eq:a_lambda} - \eqref{eq:b_lambda}, the highest term of $a d$ with respect to $\lambda$ is
\begin{eqnarray}
a_\lambda d_\lambda & = &  \frac{3}{2}(p-1) c_1 (p-2 ) (-1)^{\frac{3}{2}p - \frac{5}{2}} \lambda^{\frac{3}{2}p - \frac{5}{2}} \frac{ (p-2) !}{\left( \cfrac{p-1}{2} \right) ! \left( \cfrac{p}{2} - \cfrac{3}{2} \right) !} c_2 (p-2) ( - 1)^{\frac{p}{2} - \frac{3}{2}} \lambda^{\frac{p}{2} - \frac{3}{2}} \nonumber \\
& = & \frac{3}{2}(p-1) \frac{ (p-2) !}{\left( \cfrac{p-1}{2} \right) ! \left( \cfrac{p}{2} - \cfrac{3}{2} \right) !} c_1 (p-2 ) c_2 (p-2 ) (-1)^{2 p - 4} \lambda^{2 p - 4}, \nonumber
\end{eqnarray}
whereas that of $b c$ is 
\begin{eqnarray}
b_{\lambda} c_{\lambda } & = & c_1 (p-2 ) (-1)^{\frac{3}{2}p- \frac{3}{2}} \lambda^{\frac{3}{2}p-\frac{3}{2}} \frac{ (p-2) !}{\left( \cfrac{p+1}{2} \right) ! \left( \cfrac{p}{2} - \cfrac{5}{2} \right) !} c_2 (p-2) ( - 1)^{\frac{p}{2} - \frac{5}{2}} \lambda^{\frac{p}{2} - \frac{5}{2}} \nonumber \\
& = & \frac{ (p-2) !}{\left( \cfrac{p+1}{2} \right) ! \left( \cfrac{p}{2} - \cfrac{5}{2} \right) !}  c_1 (p-2 ) c_2 (p-2 ) (-1)^{2 p - 4} \lambda^{2 p - 4} . \nonumber
\end{eqnarray}
It suffices to show 
\[
\frac{3}{2}(p-1) \frac{ (p-2) !}{\left( \cfrac{p-1}{2} \right) ! \left( \cfrac{p}{2} - \cfrac{3}{2} \right) !}
\neq \frac{ (p-2) !}{\left( \cfrac{p+1}{2} \right) ! \left( \cfrac{p}{2} - \cfrac{5}{2} \right) !} \pmod{p}.
\]
If the equality holds, we have
\[
\frac{\frac{3}{2}(p-1)}{\cfrac{p}{2} - \cfrac{3}{2}} = \frac{1}{\cfrac{p+1}{2}} \bmod {p}
\]
and thus $1 \equiv 2^{-1} \bmod{p}$.
This is a contradiction.
Hence the highest term of $a d - b c$ with respect to $\lambda$ is $B \lambda^{2 p - 4}$ for some constant $B \in K^{\times}$, and so
\[
\frac{a d - b c}{(\mu \nu )^{\frac{p+1}{2}} } \equiv B \lambda^{2 p - 4} \bmod{(\mu, \nu )} .
\]
\end{enumerate}
\end{proof}
\fi

\begin{lem}\label{mu_or_nu_are_not_zero}
Assume $p>3$. For any point $(\lambda:\mu:\nu)$ on $V(h_0)$ in $\bbP^2$,
we have $\mu\ne 0$ or $\nu\ne 0$.
\end{lem}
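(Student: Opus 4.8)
The plan is to observe that the assertion concerns exactly one point of $\mathbb{P}^2$: since no point of $\mathbb{P}^2$ has all coordinates zero, a point $(\lambda:\mu:\nu)$ with $\mu=\nu=0$ must have $\lambda\neq 0$ and hence equals $(1:0:0)$. So it suffices to prove that $(1:0:0)\notin V(h_0)$, i.e.\ that $h_0(1,0,0)\neq 0$.

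To evaluate $h_0$ at this point I would invoke Lemma~\ref{Orders}(4), which asserts $h_0=(ad-bc)/(\mu\nu)^{(p+1)/2}\equiv B\lambda^{2p-4}\pmod{(\mu,\nu)}$ for some $B\in K^{\times}$. Concretely, among the monomials of the polynomial $h_0\in R=K[\lambda,\mu,\nu]$, those divisible by neither $\mu$ nor $\nu$ sum to $B\lambda^{2p-4}$. Substituting $\mu=\nu=0$ kills every other monomial, so $h_0(\lambda,0,0)=B\lambda^{2p-4}$, and in particular $h_0(1,0,0)=B\neq 0$. Hence $(1:0:0)$ is not a zero of $h_0$, and every point of $V(h_0)$ satisfies $\mu\neq 0$ or $\nu\neq 0$, as claimed.

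I do not expect any real obstacle here: the statement is an immediate corollary of Lemma~\ref{Orders}(4). The only thing worth flagging is where the hypothesis $p>3$ enters — it is used inside Lemma~\ref{Orders}(4) (via Lemmas~\ref{lem:beta01} and \ref{lem:beta_e}, the non-vanishing $\beta_{p-2}'\neq 0$ from Lemma~\ref{KeyLem}, and the arithmetic inequality $1\not\equiv 2\pmod p$) to guarantee that the leading $\lambda$-coefficient $B$ is genuinely a unit in $K$. With that in hand the argument above is complete, and I would simply cite Lemma~\ref{Orders}(4) and conclude.
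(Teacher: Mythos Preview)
Your proposal is correct and matches the paper's proof essentially verbatim: both reduce to showing that $h_0$ does not vanish when $\mu=\nu=0$, and both deduce this immediately from Lemma~\ref{Orders}(4), which gives $h_0\equiv B\lambda^{2p-4}\pmod{(\mu,\nu)}$ with $B\in K^\times$.
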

\begin{proof}
It suffices to show that
$\mu=\nu=0$ implies $\lambda=0$
for $(\lambda, \mu, \nu)\in K^3$ satisfying $h_0=0$.
This immediately follows from Lemma \ref{Orders} (4).
\end{proof}

Consider the point with $\nu\ne 0$.
From now on,
we substitute $1$ for $\nu$
and consider $a,b,c,d$ as polynomials in $\lambda, \mu$.
%, and also consider $a_0,c_0, b_0,d_0$ as polynomials in $\lambda$.
Set
\[
S:=K[\lambda,\mu]
\]
and let $J$ be the ideal of $S$ obtained from $I$
by substituting $1$ for $\nu$.
Set $a'=a/\mu^{(p+1)/2}$ and $c'=c/\mu^{(p+1)/2}$.
Let $a'_0,c'_0, b_0,d_0$ be the constant terms of
$a',c', b,d$ as polynomials in $\mu$,
which are polynomials in $\lambda$.
\begin{lem}\label{KeyLemma}
As polynomials in $\lambda$, we have
\begin{enumerate}
\item[\rm (1)] $c'_0$ and $d_0$ are coprime.
\item[\rm (2)] $b_0$ and $d_0$ are coprime.
\end{enumerate}
\end{lem}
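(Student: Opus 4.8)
The plan is to recognize the three polynomials $c'_0,d_0,b_0\in K[\lambda]$ as scalar multiples of $G(-\lambda),\ G^{(e)}(-\lambda),\ G^{(e+1)}(-\lambda)$ for $G:=F_2^{\,e}$, and then extract the two coprimality statements from Proposition~\ref{Separatedness}. Concretely, with $\nu=1$ fixed as above we have $F_2(x)=x^3+A_2x+B_2$, so $y^2=F_2(x)$ is the supersingular curve \eqref{originalE2}, and by \eqref{eq:beta_k} the coefficient $\beta_k$ equals $\tfrac1{k!}\bigl(F_2^{(p-1)/2}\bigr)^{(k)}(-\lambda)=\tfrac1{k!}G^{(k)}(-\lambda)$ (legitimate as $k\le e+1<p$). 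Reading off the $\mu^0$-coefficients from the expansions of $a,b,c,d$ obtained in the proof of Lemma~\ref{Orders} (with $\nu=1$) gives $c'_0=\tilde\alpha_{p-2}\,G(-\lambda)$, $d_0=\tfrac1{e!}G^{(e)}(-\lambda)$ and $b_0=\tfrac1{(e+1)!}G^{(e+1)}(-\lambda)$, where $\tilde\alpha_{p-2}\in K^{\times}$ because $E_1$ is supersingular. Since nonzero scalar factors and the bijective substitution $w=-\lambda$ preserve coprimality, it suffices to show that $G$ and $G^{(e)}$ are coprime in $K[w]$, and that $G^{(e)}$ and $G^{(e+1)}=(G^{(e)})'$ are coprime.

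Next I would pass to the Legendre form. Since $y^2=F_2(x)$ is a nonsingular elliptic curve over the algebraically closed field $K$ of characteristic $p>3$, there are $u\in K^{\times}$ and $v\in K$ with $F_2(uW+v)=u^{3}g(W)$, where $g(W)=W(W-1)(W-t)$ with $t\in K\smallsetminus\{0,1\}$; and $E_2$ being supersingular forces $H_p(t)=0$ (the Legendre curve $y^2=g(W)$ is $K$-isomorphic to $y^2=F_2(x)$; cf.\ the remark after \eqref{eq:delta_p-1}). Differentiating $G(uW+v)=u^{3e}g(W)^{e}$ repeatedly gives $G^{(k)}(uW+v)=u^{3e-k}(g^{e})^{(k)}(W)$, and since $W\mapsto uW+v$ is a bijection of $K$ it is enough to prove both coprimality statements with $g^{e}$ in place of $G$. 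Now Proposition~\ref{Separatedness} yields
\[
(g^{e})^{(e)}\;=\;\frac{(p-1)!}{(-1)^{e}e!}\,\prod_{i=1}^{e}\bigl\{(t-a_i)W-(1-a_i)t\bigr\},
\]
where $a_1,\dots,a_e$ are the roots of $H_p$, which are pairwise distinct since $H_p$ has only simple roots (Igusa; cf.\ the proof of Proposition~\ref{KeyProp}(4)). As $H_p(t)=0$ we have $t=a_j$ for some $j$, so the $j$-th factor degenerates to the nonzero constant $-(1-t)t$, while each remaining factor is linear with root $r_i:=(1-a_i)t/(t-a_i)$; a one-line computation shows $r_i=r_{i'}$ for $i\ne i'$ forces $(a_i-a_{i'})(1-t)=0$, impossible, and that the coefficient of $W^{e-1}$ in the product is nonzero. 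Hence $(g^{e})^{(e)}$ is a nonzero polynomial whose roots are exactly the $e-1$ pairwise distinct, hence simple, values $r_i$ ($i\ne j$).

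Finally I would conclude. The coprimality of $(g^{e})^{(e)}$ and $(g^{e})^{(e+1)}=\bigl((g^{e})^{(e)}\bigr)'$ is immediate, since a nonzero polynomial with only simple roots shares no root with its derivative; this settles part~(2). For part~(1) I must check that none of the roots $0,1,t$ of $g^{e}$ lies among the $r_i$: $r_i=0$ would force $a_i=1$ and $r_i=1$ would force $a_i=0$, both excluded since $H_p(0)=1\ne0$ and $H_p(1)=\binom{2e}{e}\equiv(-1)^{e}\not\equiv0\pmod p$; and $r_i=t$ would force $t=1$, a contradiction. I expect the only genuine work to be two routine verifications --- that the displayed $\mu^0$-coefficients are exactly those evaluations of $G$ and its derivatives (bookkeeping essentially already done in the proof of Lemma~\ref{Orders}), and the small arithmetic facts just used ($H_p(0),H_p(1)\ne0$ in characteristic $p$, distinctness of the $r_i$, and nonvanishing of the leading coefficient of $(g^{e})^{(e)}$) --- while the substantive input, the factorization of $(g^{e})^{(e)}$, is already supplied by Proposition~\ref{Separatedness}.
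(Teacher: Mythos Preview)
Your proof is correct and follows essentially the same approach as the paper: both identify $c'_0,d_0,b_0$ with (nonzero scalar multiples of) $G(-\lambda),\ \tfrac{1}{e!}G^{(e)}(-\lambda),\ \tfrac{1}{(e+1)!}G^{(e+1)}(-\lambda)$ for $G=F_2^{\,e}$, pass to the Legendre form, and use Proposition~\ref{Separatedness} to see that $(g^{e})^{(e)}$ is separable, which gives~(2). The only difference is in part~(1): the paper observes directly that since $F_2$ is separable, every root of $G=F_2^{\,e}$ has multiplicity exactly $e$, so $G^{(e)}$ does not vanish there (Leibniz gives $G^{(e)}(r)=e!\prod_{j\ne i}(r-r_j)^{e}\ne 0$ at a root $r=r_i$ of $F_2$), and hence $G$ and $G^{(e)}$ are coprime without invoking Proposition~\ref{Separatedness} at all. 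Your route---explicitly listing the roots $r_i=(1-a_i)t/(t-a_i)$ of $(g^{e})^{(e)}$ via Proposition~\ref{Separatedness} and checking $r_i\notin\{0,1,t\}$ using $H_p(0)=1$ and $H_p(1)\equiv(-1)^{e}$---is perfectly valid but slightly longer; the paper's argument for~(1) is more economical.
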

\begin{proof}
(1) Since
\[
c'_0=\tilde\alpha_{p-2}\beta_0,\qquad
d_0=\tilde\alpha_{3e}\beta_e
\]
and $\tilde \alpha_{p-2}$ and $\tilde \alpha_{3e}$ are non-zero constant as polynomials in $\lambda$, it suffices to see that $\beta_0$ and $\beta_e$
are coprime as polynomials in $\lambda$.
As
\[
\beta_0 = F_2^{e}(-\lambda),\qquad
\beta_e = \frac{1}{e!}(F_2^{e})^{(e)}(-\lambda)
\]
and $F_2$ is a separable polynomial, $\beta_0$ and $\beta_e$
are coprime.

(2) Since
\[
b_0 = \tilde\alpha_{3e}\beta_{e+1},\qquad
d_0=\tilde\alpha_{3e}\beta_e
\]
and $\tilde \alpha_{3e}$ is a non-zero constant,
it suffices to see that $\beta_e$ and $\beta_{e+1}$ are coprime.
As
\[
\beta_e = \frac{1}{e!}(F_2^{e})^{(e)}(-\lambda),\qquad
\beta_{e+1} = \frac{1}{(e+1)!}(F_2^{e})^{(e+1)}(-\lambda),
\]
it suffices to show that $(F_2^{e})^{(e)}(-\lambda)$
is a separable polynomial.
A linear coordinate change makes $F_2$ a Legendre form $g(x)=x(x-1)(x-a_1)$,
where $a_1$ is a solution of $H_p(t)=0$ as $y^2=F_2$ is a supersingular elliptic curve. Let $H_p(t) = \prod_{i=1}^e (t-a_i)$ be a factorization of $H_p(t)$.
By Proposition \ref{Separatedness}, up to multiplication by a non-zero constant,
$(g(x)^e)^{(e)}$  is factored as
\begin{equation}\label{Separatedness_supersingular-case}
%-(1-a_1)a_1 
\prod_{i=2}^e\{(a_1-a_i)x-(1-a_i)a_1\}.
\end{equation}
Here we note that $a_i\ne 0,1$ for $i=1,2,\ldots,e$
as $H_p(0)=1$ and $H_p(1)=(-1)^e$,
see the proof of \cite[Chap.~V. Theorem 4.1 (c)]{Sil}.
%$y^2=x(x-1)(x-a_i)$ is a nonsingular (supersingular) elliptic curve.
Then it is clear that \eqref{Separatedness_supersingular-case}
is a separated polynomial, and therefore $\beta_e$ is separated.
\end{proof}

\begin{prop}
Suppose that $p>3$. Then
any point $(\lambda:\mu:\nu)$ on $V(h_0,h_1,h_2)$ is of Howe type.
\end{prop}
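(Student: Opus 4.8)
The plan is to verify the two conditions defining Howe type: (i) $\mu\neq 0$ and $\nu\neq 0$; and (ii) $f_1,f_2$ are coprime. The substitution $X=x-\lambda$ interchanges the two elliptic curves; by Lemma \ref{ChangeOfVariables} it fixes $ad-bc$ and the vanishing of $MM^\sigma$, hence (by Proposition \ref{EquationsOfSupersingularity} together with the relations \eqref{four entries}) it preserves $V(h_0,h_1,h_2)$ while swapping $\mu$ and $\nu$. Combined with Lemma \ref{mu_or_nu_are_not_zero}, this reduces us to proving, for every point of $V(h_0,h_1,h_2)$ lying in the chart $\nu=1$ fixed above, that $\mu\neq 0$ and that $f_1,f_2$ are coprime.

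For $\mu\neq 0$ I argue by contradiction. If $\mu=0$ then $a=c=0$ (both are divisible by $\mu^{(p+1)/2}$), while $b,d$ specialise to their $\mu$-constant terms, which by Lemma \ref{Orders} equal $b_0=\beta_{e+1}$ and $d_0=\beta_e$ up to nonzero constants. Then $h_1|_{\mu=0}=d_0^{p}$, so $h_1=0$ forces $\beta_e(\lambda)=0$; and $h_0|_{\mu=0}=a'_0d_0-b_0c'_0$, whose value at $\lambda$ is $-b_0(\lambda)c'_0(\lambda)$, a nonzero constant times $\beta_{e+1}(\lambda)\beta_0(\lambda)$, so $h_0=0$ forces $\beta_{e+1}(\lambda)\beta_0(\lambda)=0$. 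The alternative $\beta_0(\lambda)=0$ contradicts the coprimality of $c'_0$ and $d_0$ (Lemma \ref{KeyLemma}(1)), and $\beta_{e+1}(\lambda)=0$ contradicts the coprimality of $b_0$ and $d_0$ (Lemma \ref{KeyLemma}(2)). Hence $\mu\neq 0$, and $\nu=1\neq 0$.

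For coprimality, note first that $h_0=0$ gives $ad-bc=h_0\cdot(\mu\nu)^{(p+1)/2}=0$, and then $h_1=h_2=0$ together with the relations \eqref{four entries} give $MM^\sigma=0$. Suppose $f_1,f_2$ had a common root $x_0$. I translate it to the origin by $x=X+x_0$ and apply Lemma \ref{ChangeOfVariables}. Each of $f_1(X+x_0),f_2(X+x_0)$ is divisible by $X$, so $f(X+x_0)=X^2 g(X)$ with $\deg g=4$ and $g(0)=f_1'(x_0)f_2'(x_0)$; and $g(0)\neq 0$ because $f_1,f_2$ are separable — their discriminants are $\mu^6$, resp.\ $\nu^6$, times the nonzero discriminants of $E_1,E_2$, and $\mu,\nu\neq 0$. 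Thus $f(X+x_0)^{(p-1)/2}=X^{p-1}g(X)^{(p-1)/2}$, so the transformed matrix $M'$ has $(2,1)$-entry $\gamma'_{p-2}=0$ and $(1,1)$-entry $\gamma'_{p-1}=g(0)^{(p-1)/2}\neq 0$; that is, $M'$ is upper triangular with a unit in the top-left. But Lemma \ref{ChangeOfVariables} yields $M'(M')^\sigma=P^{-1}(MM^\sigma)P^{\sigma^2}=0$, while the top-left entry of $M'(M')^\sigma$ equals $(\gamma'_{p-1})^{p+1}\neq 0$ — a contradiction. Hence $f_1,f_2$ are coprime and the point is of Howe type.

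The coprimality step is the crux. One needs a feature of the pair $(f_1,f_2)$ that is forced to vanish by the nilpotence relation $MM^\sigma=0$ but is visibly nonzero as soon as $f_1$ and $f_2$ share a root; moving that root to the origin makes the Cartier–Manin matrix triangular with diagonal entry $g(0)^{(p-1)/2}\neq 0$, and this is exactly what prevents $MM^\sigma$ from vanishing. The step $\mu\neq 0$, by contrast, is bookkeeping built entirely on the constant terms and coprimalities already recorded in Lemmas \ref{Orders} and \ref{KeyLemma}.
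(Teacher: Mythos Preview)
Your proof is correct and follows essentially the same route as the paper's. The paper handles the $\mu\leftrightarrow\nu$ symmetry by the phrase ``a similar argument shows that $\mu\ne 0$ implies $\nu\ne 0$'' (implicitly using the same substitution $X=x-\lambda$ you make explicit), and for coprimality it performs a slightly more elaborate coordinate change to $f_1=x(x-1)(x-t_1)$, $f_2=x(x-t_2)(x-t_3)$ and reads off $c=0$, $a=(t_1t_2t_3)^e\ne 0$, concluding $h_2\ne 0$; your version, translating the common root to the origin and observing that $M'$ becomes upper triangular with $(M'(M')^\sigma)_{11}=(g(0)^{(p-1)/2})^{p+1}\ne 0$, is the same mechanism with one fewer normalization.
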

\begin{proof}
Let $(\lambda:\mu:\nu)$ be a point of $V(h_0,h_1,h_2)$.
By Lemma \ref{mu_or_nu_are_not_zero}, we have $\mu\ne 0$ or $\nu \ne 0$.
Suppose $\nu\ne 0$. Substututing $1$ for $\nu$ and consider everything as
a polynomial in $\mu$ and $\lambda$.
It follows from $h_0=0$ and $h_1=0$ that
\begin{eqnarray*}
a'_0 d_0 - b_0 c'_0 &\equiv& 0 \quad (\mod\ \mu),\\
d_0^p &\equiv& 0 \quad (\mod\ \mu).
\end{eqnarray*}
This and Lemma \ref{KeyLemma} imply that $1\equiv 0 \quad (\mod\ \mu)$,
whence $\mu$ is unit in $S/J$.
Thus we have shown that $\nu\ne 0$ implies $\mu\ne 0$
for any point $(\lambda:\mu:\nu)$ on $V(h_0,h_1,h_2)$.
A similar argument shows that $\mu\ne 0$ implies that $\nu\ne 0$.
Hence we conclude that both of $\mu$ and $\nu$ are not zero.

It remains to show that $f_1$ and $f_2$ defined in \eqref{f1} and \eqref{f2} are coprime.
As $\mu$ and $\nu$ are not zero, $f_1$ and $f_2$ are separated polynomials.
Suppose that $f_1$ and $f_2$ were not coprime.
After taking a linear coordinate change, they are written as
$f_1 = x(x-1)(x-t_1)$ and $f_2 = x(x-t_2)(x-t_3)$. Then
\[
(f_1f_2)^e = x^{p-1}\{(x-1)(x-t_1)(x-t_2)(x-t_3)\}^e.
\]
Then $M=\begin{pmatrix}a & b\\ c & d\end{pmatrix}$ becomes
a upper triangular matrix (i.e., $c=0$) with $a = (t_1t_2t_3)^e$.
Since $f_1$ and $f_2$ are separated, we have $t_i\ne 0$ for $i=1,2,3$
and therefore $h_2 \ne 0$. This is a contradiction.
\end{proof}

Finally we show that $V(h_0,h_1,h_2)$ is not empty.
The fact is reminiscent of the quasi-affineness (cf.\ \cite[(6.5).\ Theorem]{Oort}) of Ekedahl-Oort strata in the case of the moduli space of principally polarized abelian varieties.
\begin{prop}
$V(h_0,h_1,h_2)\ne \emptyset$.
\end{prop}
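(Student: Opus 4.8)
The strategy is to recognise $V(h_0,h_1,h_2)$ as (essentially) the pullback of the supersingular locus $\mathcal S_2\subseteq\mathcal A_2$ under the moduli morphism attached to the family of genus-$2$ curves $C\colon u^2=f_1f_2$, and then to prove that this pullback is nonempty by a positivity (intersection-theoretic) argument on a complete model of the parameter space. This is exactly the mechanism behind Oort's theorem that Ekedahl--Oort and Newton strata of $\mathcal A_g\otimes\overline{\mathbb F}_p$ are quasi-affine, here made concrete: the cycle class of the supersingular locus in $\overline{\mathcal A}_2$ is a positive multiple of the square of the (nef and big) Hodge class $\lambda$.

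First I would set up the family. Let $U\subseteq\mathbb P^2$ be the Howe-type locus. Since $E_1$ and $E_2$ are nonsingular, a point lies in $U$ precisely when $f_1,f_2$ are separable and coprime, i.e.\ precisely when $C$ is smooth of genus $2$; in particular $\mathbb P^2\setminus U=\{\mu=0\}\cup\{\nu=0\}\cup\{\operatorname{Res}(f_1,f_2)=0\}$ is a hypersurface and over $U$ there is a morphism $\Phi\colon U\to\mathcal M_2\to\mathcal A_2$, $P\mapsto[J(C_P)]$. By \eqref{Cartier-Manin matrix}, the supersingularity criterion \eqref{Supersingularity}, and Proposition~\ref{EquationsOfSupersingularity}, the locus in $U$ on which $J(C)$ is supersingular (equivalently has $p$-rank $0$) is precisely $V(h_0,h_1,h_2)$; combined with the previous proposition, which ensures every point of $V(h_0,h_1,h_2)$ is of Howe type, this yields $\Phi^{-1}(\mathcal S_2)=V(h_0,h_1,h_2)$.

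Next I would compactify and compute a class. Using stable reduction, extend the family to a family of stable genus-$2$ curves over a smooth complete surface $X'$ carrying a birational morphism $\pi\colon X'\to\mathbb P^2$ that is an isomorphism over $U$, and let $\overline\Phi\colon X'\to\overline{\mathcal M}_2\to\overline{\mathcal A}_2$ be the induced map to a toroidal compactification. The Hodge class $\lambda$ is nef and big on $\overline{\mathcal A}_2$; its restriction to $U$ is $\mathcal O_{\mathbb P^2}(3)$ (consistently with $\det M=ad-bc$ being a section of $\lambda^{p-1}$ of degree $3(p-1)$); and $L:=\overline\Phi^*\lambda$ is nef and big, because $\overline\Phi$ is generically finite onto a surface --- the six branch points of $C$ already vary in a two-parameter family as $(\lambda:\mu:\nu)$ moves. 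By the known cycle-class formula for the $p$-rank-$0$ locus, $[\mathcal S_2]=(p-1)(p^2-1)\lambda^2$ in $\overline{\mathcal A}_2$, so $\overline\Phi^{*}[\mathcal S_2]=(p-1)(p^2-1)L^2$ is a $0$-cycle on $X'$ of strictly positive degree. Hence $\overline\Phi^{-1}(\mathcal S_2)\neq\emptyset$.

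The remaining --- and hardest --- step is to descend this nonemptiness to $U$, i.e.\ to exclude the possibility that $\overline\Phi^{-1}(\mathcal S_2)$ is supported entirely over $X'\setminus\pi^{-1}(U)$. Since $u^2=f_1f_2$ is always geometrically irreducible and its only degenerations are nodal or cuspidal, every singular fiber has semiabelian Jacobian of $p$-rank $\ge 1$, hence not supersingular, unless its stable model is $E_1\cup_{\mathrm{pt}}E'$ arising from a cusp --- which occurs only along $\{\mu=0\}$ (with $E'$ the smooth model of $\tilde u^2=xf_2(x)$) or along $\{\nu=0\}$, and there only for the finitely many parameters for which $E'$ is supersingular. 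Thus one must bound the number of these boundary supersingular members, and the exceptional correction to $L=3\pi^*H-E$, and verify the bound is strictly below $(p-1)(p^2-1)L^2$; then $V(h_0,h_1,h_2)=\overline\Phi^{-1}(\mathcal S_2)\cap U$ is forced to be nonempty. Here I would feed back the explicit structure of $h_0,h_1,h_2$ from Section~4 --- divisibilities such as $\mu^{(p^2-1)/2}\mid h_2$ and its analogue along $\{\nu=0\}$, the descriptions of $h_0|_{\{\mu=0\}}$ and $h_1|_{\{\mu=0\}}$ via the $\beta_k$, and the coprimality statements of Lemmas~\ref{Orders} and~\ref{KeyLemma} --- to compute $L^2$ and the boundary contribution exactly. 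I expect the positivity of $[\mathcal S_2]$ against the big class $L$ to be the conceptual core, and this boundary bookkeeping to be the technical heart of the proof.
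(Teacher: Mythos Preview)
Your moduli-theoretic strategy is interesting and genuinely different from the paper's, but it has a real gap that you yourself flag: the boundary analysis is not carried out. You correctly identify that after compactifying and pulling back the class $[\mathcal S_2]=(p-1)(p^2-1)\lambda^2$ you must still exclude the possibility that $\overline\Phi^{-1}(\mathcal S_2)$ lies entirely over $X'\setminus\pi^{-1}(U)$, but you only say you ``expect'' the positivity to win and ``would feed back'' the explicit structure of $h_0,h_1,h_2$ to verify it. Without actually computing $L^2$ and the boundary contributions and comparing them, this is a plan, not a proof. The degenerations along $\{\mu=0\}$, $\{\nu=0\}$, and $\{\operatorname{Res}(f_1,f_2)=0\}$ are of different types (triple-root cusps versus shared-root nodes), so the stable-reduction and intersection bookkeeping is not obviously short; and your boundary description already needs correction, since along $\{\operatorname{Res}(f_1,f_2)=0\}$ the stable model is a genus-$1$ curve $\tilde u^2=g(x)$ with $\deg g=4$, which can perfectly well be supersingular and must also be accounted for.

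The paper sidesteps all of this with an elementary projectivity argument. Suppose $V(h_0,h_1,h_2)=\emptyset$ and set $X=V(h_0)\subset\bbP^2$, a connected projective curve. On $X$ one cannot have $a=d=0$ (else $h_1=h_2=0$), so the rule $P\mapsto(a^p+c^{p-1}d:c^{p-1}d)$ on $\{a\ne0\}$ and $P\mapsto(ab^{p-1}+d^p:d^p)$ on $\{d\ne0\}$ glues, via \eqref{BasicEqualities1}--\eqref{BasicEqualities2}, to a morphism $\varphi\colon X\to\bbP^1\setminus\{(0:1)\}\cong\bbA^1$; the target is affine precisely because $h_1,h_2$ never vanish together on $X$. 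Since $V(h_0)\cap V(a)$ and $V(h_0)\cap V(d)$ are nonempty in $\bbP^2$, the image of $\varphi$ contains both $(1:1)$ and $(1:0)$, so $\varphi$ is nonconstant---contradicting the projectivity of $X$. This is exactly the quasi-affineness principle you invoke, but applied directly to the concrete curve $V(h_0)$ rather than routed through $\overline{\mathcal A}_2$: no compactification, no cycle classes, no boundary count.
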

\begin{proof}
We suppose that $V(h_0,h_1,h_2) = \emptyset$ were true.
Set $X:=V(h_0)=V(h_0)\smallsetminus V(h_0,h_1,h_2)$.
% the non-supersingular part of $V(h_0)$.
Note that $(a,d)=(0,0)$ is not allowed on $X$,
as $(a,d)=(0,0)$ implies $h_1=h_2=0$.
Consider the morphism
\[
\varphi: X \longrightarrow \bbP^1\smallsetminus \{(0:1)\}\simeq \bbA^1
\]
sending $(\lambda:\mu:\nu)$ to 
$(a^p+c^{p-1}d:c^{p-1}d)$
for the part of $a\ne 0$ and
$(ab^{p-1}+d^p:d^p)$
for the part of $d\ne 0$, which is well-defined by \eqref{BasicEqualities1} and \eqref{BasicEqualities2}.
Moreover the image of $\varphi$ consists of infinitely many points
from the following two facts.
Firstly $X$ is connected since it is a hypersurface in $\bbP^2$.
Secondly $(1:1)$ (resp. $(1:0)$) is an image of $\varphi$
since 
there exists $(\lambda:\mu:\nu)\in \bbP^2$ such that
$h_0=0$ and $a = 0$ (resp. $h_0=0$ and $d = 0$),
by \cite[Chap.1, Theorem 7.2]{Har}.
The existence of such a morphism as $\varphi$ implies that $X$ is not projective. This is a contradiction.
\end{proof}

\end{document}